
\documentclass[12pt]{amsart}
\usepackage{amsfonts, amssymb,amsmath, amsthm, amsxtra, latexsym, mathrsfs, amscd}
\usepackage{amsmath, amssymb, amsthm, times, float}
\usepackage{mathtools, amssymb, amsthm, times, float}
\usepackage{graphics}
\usepackage{MnSymbol}
\usepackage{enumerate}
\usepackage{enumitem}
\usepackage{tikz-cd}

\usepackage{amssymb,hyperref}
\usepackage{backref}
\usepackage[latin1]{inputenc} %

\addtolength{\oddsidemargin}{-1.4cm}
\addtolength{\evensidemargin}{-1.4cm}
\addtolength{\textwidth}{3.6cm}
\addtolength{\textheight}{1.5cm}
\addtolength{\topmargin}{-1.0cm}

\newtheorem{theo+}{Theorem}[section]
\newtheorem{prop+}[theo+]{Proposition}
\newtheorem{coro+}[theo+]{Corollary}
\newtheorem{lemm+} [theo+]{Lemma}
\newtheorem{deep+}  [theo+]  {Deep Result}
\newtheorem{fact+}  [theo+]  {Fact}

\theoremstyle{definition}
\newtheorem{exam+}  [theo+]  {Example}
\newtheorem{rema+}  [theo+]  {Remark}
\newtheorem{defi+}  [theo+]  {Definition}
\newtheorem*{definition*}{Definition}
\newtheorem{xca+}[theo+]{Exercise}
\newtheorem{thmalph}{Theorem}

\newenvironment{proposition}{\begin{prop+}}{\end{prop+}}

\newenvironment{lemma}{\begin{lemm+}}{\end{lemm+}}

\newenvironment{remark}{\begin{rema+}}{\end{rema+}}

\pagestyle{myheadings}
\numberwithin{equation}{section}

\usepackage{color}


\newcommand\beq{\begin{equation}\label}
\newcommand\eeq{\end{equation}}

\renewcommand\l{\left}

\renewcommand\a[1]{{\acute{#1}}}

\newcommand\e[1]{{\ddot{#1}}}

\def\draft{\centerline{(Draft {\the \day}/{\the\month} \the \year.)}}
\bigskip

\def\refn#1.#2{\expandafter\def\csname#1\endcsname{[#2]}}
\def\refnr#1.{\csname#1\endcsname}



\def\fe{\mathfrak e}

\def\fg{\mathfrak g}
\def\fgl{\mathfrak {gl}}
\def\fk{\mathfrak k}

\def\fl{\mathfrak l}
\def\fm{\mathfrak m}

\def\fsl{\mathfrak {sl}}
\def\ft{\mathfrak t}

\def\fu{\mathfrak u}

\def\fsp{\mathfrak{sp}}

\def\fsu{\mathfrak{su}}


\def\a{\alpha}

\def\Claminv2{|C(\Lambda)|^{-2}}

\def\varepsi{\varepsilon}

\def\de{d\varepsilon}

\def\Aa2D{A^{\a,2}(D)}
\def\bAa2D{\overline{A^{\a,2}(D)}}
\def\Ab2D{A^{\beta,2}(D)}
\def\bAb2D{\overline{A^{\beta,2}(D)}}

\def\abs#1{\vert#1\vert}

\def\Norm#1_#2{\Vert#1\Vert_{#2}}

\def\phipl12{\phi_{p_{l_1}, p_{l_2}}}
\def\phip01{\phi_{p_{0}, p_{0}}}









\def\a{\alpha}

\def\Claminv2{|C(\Lambda)|^{-2}}

\def\varepsi{\varepsilon}



\def\ad{\operatorname{ad}}
\def\Ad{\operatorname{Ad}}


\def\det{\operatorname{det}}

\def\diag{\operatorname{diag}}

\def\Ind{\operatorname{Ind}}
\def\diag{\operatorname{diag}}

\def\exp{\operatorname{exp}}

\def\sgn{\operatorname{sgn}}

\def\tr{\operatorname{tr}}

\def\Ker{\operatorname{Ker}}

\def\de{d\varepsilon}

\def\Aa2D{A^{\a,2}(D)}
\def\bAa2D{\overline{A^{\a,2}(D)}}
\def\Ab2D{A^{\beta,2}(D)}
\def\bAb2D{\overline{A^{\beta,2}(D)}}

\def\phipl12{\phi_{p_{l_1}, p_{l_2}}}
\def\phip01{\phi_{p_{0}, p_{0}}}



\def\alg/{algebra}

\def\Alg/{Algebra}

\def\alt/{alternative} 
\def\anal/{analytic}
\def\analfunc/{\anal/\ \func/}
\def\Ans/{\it Answer. \normal}
\def\ass/{associative}
\def\nass/{non-\ass/}
\def\autom/{automorphism}
\def\homom/{homomorphism}
\def\isom/{isomorphism}
\def\bdd/{bounded}
\def\Bdd/{Bounded}
\def\bddsymdom/{bounded \sym/ \dom/}
\def\Cartdom/{Cartan \dom/}
\def\bdry/{boundary}
\def\bsd/{\bdd/ \symdom/}
\def\bv/{boundary value}
\def\cf/{{\it cf}\.}
\def\Cf/{{\it Cf}\.}
\def\charr/{character}
\def\coeff/{coefficient}
\def\comm/{commutative}
\def\cpct/{compact}
\def\compl/{complex}
\def\comp/{complex}
\def\Comp/{Complex}
\def\conf/{conformal}
\def\conj/{conjugate}
\def\conn/{connect}
\def\cont/{continuous}
\def\conv/{converge} 
\def\convc/{convergence}
\def\convt/{convergent}
\def\convx/{convex}
\def\coord/{coordinate}
\def\lcoord/{local coordinate}
\def\Corr/{Corresponding}
\def\corr/{corresponding}
\def\corrd/{correspond}
\def\cov/{covariant}
\def\decomp/{decomposition}
\def\deco/{decompose}
\def\diff/{different} 
\def\Diff/{Different} 
\def\dimn/{dimension} 
\def\distr/{distribution} 
\def\div/{diverge} 
\def\dom/{domain}
\def\eg/{\hbox{\it e.g}\.}
\def\eigenf/{eigen\-\func/}
\def\eigensp/{eigen\-space}
\def\eigenv/{eigen\-value}
\def\eq/{equation}
\def\equa/{equation}
\def\de/{\diff/ial \equa/}
\def\do/{\diff/ial operator}
\def\ode/{ordinary \de/}
\def\pde/{partial \de/}
\def\pdo/{partial \diff/ial operator}
\def\psdo/{pseudo \diff/ial operator}
\def\fin/{finite}
\def\Ex/{\it Example.\ \normal}
\def\Exnr#1/{\it Example #1.\ \normal}
\def\foll/{follow}
\def\follg/{following}
\def\Follg/{Following}
\def\func/{function}
\def\Func/{Function}
\def\Fonc/{Fonc\-tion}
\def\fonc/{fonc\-tion}
\def\Funk/{Funk\-tion}
\def\funk/{Funk\-tion}
\def\gen/{general}
\def\har/{harmonic}
\def\Hint/{\it Hint. \normal}
\def\hist/{historic}
\def\histcl/{historical}
\def\hol/{holo\-morphic}
\def\homog/{ho\-mo\-ge\-ne\-ous}
\def\hyp/{hyper\-bolic}
\def\hyperg/{hyper\-geometric}
\def\ie/{\hbox{\it i.e.}}
\def\iff/{if and only if}
\def\ineq/{inequality}
\def\infra/{{\it inf\-ra}}
\def\ultra/{{\it ult\-ra}}
\def\Inpart/{In particular}
\def\inpart/{in particular}
\def\instof/{instead of}
\def\interps/{interpolation space}
\def\interp/{interpolation}
\def\Interp/{Interpolation}
\def\interpr/{Interpretation}
\def\Intr/{Introduction}
\def\intv/{interval}
\def\inv/{invariant}
\def\invc/{invariance}
\def\Iowords/{In other words}
\def\iowords/{in other words}
\def\ipr/{inner product}
\def\irred/{irreducible}
\def\lb/{line bundle}
\def\lin/{linear}
\def\lhs/{left hand side}
\def\rhs/{right hand side}
\def\loc/{local}
\def\math/{mathematic}
\def\mathcn/{\math/ian}
\def\manif/{manifold}
\def\meas/{measure}
\def\measl/{measurable}
\def\mero/{mero\-morphic}
\def\mon/{monomial}
\def\monog/{monogenic}
\def\mult/{multiple}
\def\multy/{multiply}
\def\multn/{multiplication}
\def\nas/{necessary and sufficient}
\def\nbd/{neighborhood}
\def\neg/{negative}
\def\nondeg/{nondegenerate}
\def\Oohand/{On the other hand}
\def\oohand/{on the other hand}
\def\Oonhand/{On the one hand}
\def\oonhand/{on the one hand}
\def\oper/{operator}
\def\orth/{ortho\-gonal}
\def\orthon/{ortho\-normal}
\def\otoh/{on the other hand}
\def\quat/{quaternion}
\def\pp/{\hbox{a. e.}}
\def\psh/{plurisubharmonic}
\def\pol/{polynomial}
\def\pot/{potential}
\def\pos/{positive}
\def\princ/{principle}
\def\prob/{probability}
\def\proj/{projective}
\def\projn/{projection}
\def\Proof/{\it Proof:\normal}
\def\Rem/{\it Remark\normal}
\def\Remnr#1/{\it Remark\ \normal #1. }
\def\rep/{representation}
\def\reps/{representations}
\def\meta/{metaplectic representation}
\def\repr/{reproducing}
\def\reprker/{reproducing kernel}
\def\resp/{respective} 
\def\resply/{respectively}
\def\restr/{restriction}
\def\sa/{self-adjoint}
\def\st/{such that}

\def\sol/{solution}
\def\ru/{space}
\def\sph/{spherical}
\def\ssp/{sub\ru/}
\def\sym/{symmetric}
\def\Sym/{Symmetric}
\def\symb/{symbol}
\def\symbc/{symbolic}
\def\symdom/{\sym/ domain}
\def\symp/{symplectic}
\def\Theor#1/{\fet Theorem #1.\ \normal}
\def\Lem#1/{\fet Lemma #1.\ \normal}
\def\Lemma/{\fet Lemma.\ \normal}
\def\topl/{topology}
\def\topll/{topological}
\def\transf/{transform}
\def\transl/{translation}
\def\transfn/{transformation}
\def\transv/{transvectant}
\def\trig/{trigonometric}
\def\tril/{trilinear}
\def\trilf/{trilinear form}
\def\uhp/{upper halfplane}
\def\uhs/{upper halfspace}
\def\vb/{vector bundle}
\def\vf/{vector field}
\def\vsp/{vector space}
\def\wrt/{with respect to}
\def\Wlog/{Without loss of generality}
\def\a{\alpha}

\def\e{\varepsilon}

\def\Ab/{Abel}
\def\Ban/{Banach}
\def\Bansp/{\Ban/ space}
\def\Belt/{Bel\-tra\-mi}
\def\Berg/{Berg\-man}
\def\Bern/{Ber\-nou\-lli}
\def\Berz/{Berezin}
\def\Bess/{Bessel}
\def\Cart/{Car\-tan}
\def\Cay/{Cay\-ley}
\def\CG/{Clebsch-Gordan}
\def\Cl/{Clifford}
\def\CR/{Cauchy-Rie\-mann}
\def\Dir/{Dirichlet}
\def\Eucl/{Euclide}
\def\Eucln/{Euclidean}
\def\F/{Fourier}
\def\Hank/{Hankel}
\def\Hankf/{\Hank/ form}
\def\Herm/{Hermite}
\def\Hilb/{Hilbert}
\def\Hilbs/{Hilbert space}
\def\Hilbsp/{Hilbert space}
\def\HS/{Hilbert-Schmidt}
\def\Lag/{La\-grange}
\def\Lap/{La\-place}
\def\LapBelt/{\Lap/-\Belt/}
\def\Leb/{Lebesgue}
\def\Marc/{Mar\-cin\-kie\-wicz}
\def\Moeb/{Moebius}
\def\Moebt/{Moebius transformation}
\def\Moebtransfn/{Moebius transformation}
\def\Pla/{Plan\-che\-rel}
\def\Poin/{Poin\-car\'e}
\def\Riem/{Rie\-mann}
\def\Riemn/{\Riem/ian}
\def\psRiemn/{pseudo-\Riem/ian}
\def\Riems/{Rie\-mann surface}
\def\Schroe/{Schr\"odinger}
\def\Weier/{Weier\-strass}

\def\im{\operatorname{Im}}
%

%

\def\anal/{analytic}
\def\bsd/{bounded symmetric domain  }
\def\bdd/{bounded}
\def\calc/{calculation}\def\conj{conjugate}
\def\calci/{calculating}\def\eg{e.g.}
\def\conj/{conjugate}
\def\deco/{decomposition}
\def\eg/{e.g.}
\def\fct/{function}
\def\gp/{group}
\def\hw/{highest weight}
\def\hwv/{highest weight vector}
\def\hwvs/{highest weight vectors}
\def\lw/{lowest weight}
\def\lwv/{lowest weight vector}
\def\lwvs/{lowest weight vectors}
\def\hds/{holomorphic discrete series}
\def\iff/{if and only if}
\def\inv/{invariant}
\def\irrde/{irreducible decomposition}
\def\meas/{measure}
\def\transf/{transform}
\def\rep/{representation}
\def\resp/{respectively}
\def\inters/{intertwines}
\def\interg/{intertwining}
\def\meta/{metaplectic representation}
\def\qu/{quaternion}
\def\rep/{representation}
\def\symdom/{ symmetric domain}
\def\st/{such that}
\def\shd/{subhead}
\def\transf/{transform}
\def\wrt/{with respect to}

\def\Norm#1#2#3{\Vert#1\Vert^{#3}_{{#2}}}

\def\tr{\operatorname{tr}}


\newmuskip\pFqmuskip

\newcommand*\pFq[6][8]{%
	\begingroup 
	\pFqmuskip=#1mu\relax
	\mathchardef\normalcomma=\mathcode`,
	\mathcode`\,=\string"8000
	\begingroup\lccode`\~=`\,
	\lowercase{\endgroup\let~}\pFqcomma
	{}_{#2}F_{#3}{\left(\genfrac..{0pt}{}{#4}{#5};#6\right)}%
	\endgroup
}
\newcommand{\pFqcomma}{{\normalcomma}\mskip\pFqmuskip}





\DeclareMathOperator{\SL}{SL}
\let\sl\relax
\DeclareMathOperator{\sl}{\mathfrak{sl}}
\DeclareMathOperator{\upO}{O}

\DeclareMathOperator{\SO}{SO}
\DeclareMathOperator{\so}{\mathfrak{so}}
\DeclareMathOperator{\Sp}{Sp}

\let\sp\relax
\DeclareMathOperator{\sp}{\mathfrak{sp}}
\DeclareMathOperator{\SU}{SU}
\DeclareMathOperator{\su}{\mathfrak{su}}

\DeclareMathOperator{\spin}{\mathfrak{spin}}
\DeclareMathOperator{\upU}{U}

\newcommand{\fraka}{\mathfrak{a}}

\newcommand{\frake}{\mathfrak{e}}

\newcommand{\frakg}{\mathfrak{g}}

\newcommand{\frakk}{\mathfrak{k}}
\newcommand{\frakl}{\mathfrak{l}}
\newcommand{\frakm}{\mathfrak{m}}
\newcommand{\frakn}{\mathfrak{n}}
\newcommand{\frakp}{\mathfrak{p}}

\newcommand{\fraks}{\mathfrak{s}}
\newcommand{\frakt}{\mathfrak{t}}
\newcommand{\fraku}{\mathfrak{u}}
\newcommand{\frakz}{\mathfrak{z}}

\newcommand{\CC}{\mathbb{C}}

\newcommand{\NN}{\mathbb{N}}

\newcommand{\RR}{\mathbb{R}}
\newcommand{\ZZ}{\mathbb{Z}}

\newcommand{\calD}{\mathcal{D}}

\newcommand{\calF}{\mathcal{F}}
\newcommand{\calH}{\mathcal{H}}

\newcommand{\calO}{\mathcal{O}}
\newcommand{\calP}{\mathcal{P}}

\newcommand{\calS}{\mathcal{S}}

\newcommand{\calU}{\mathcal{U}}
\newcommand{\calV}{\mathcal{V}}

\DeclareMathOperator{\Hom}{Hom}

\DeclareMathOperator{\const}{const}

\DeclareMathOperator{\met}{met}

\renewcommand\Re{\operatorname{Re}}

\renewcommand{\min}{{\textup{min}}}
\DeclareMathOperator{\otimeshat}{\hat{\otimes}}
\DeclareMathOperator{\GKDIM}{GKDIM}
\newcommand{\ntm}{\textup{ntm}}

\begin{document}
	
\title[Heisenberg parabolically induced representations of Hermitian Lie groups, Part II]{Heisenberg parabolically induced representations of Hermitian Lie groups, Part II: Next-to-minimal representations and branching rules}

\begin{abstract}
Every simple Hermitian Lie group has a unique family of spherical representations induced from a maximal parabolic subgroup whose unipotent radical is a Heisenberg group. For most Hermitian groups, this family contains a complementary series, and at its endpoint sits a proper unitarizable subrepresentation. We show that this subrepresentation is next-to-minimal in the sense that its associated variety is a next-to-minimal nilpotent coadjoint orbit. Moreover, for the Hermitian groups $\SO_0(2,n)$ and $E_{6(-14)}$ we study some branching problems of these next-to-minimal representations.
\end{abstract}

\keywords{Hermitian Lie groups, Heisenberg parabolic subgroups, induced representations, complementary series, unitarizable subrepresentations}

\subjclass[2010]{17B15, 17B60, 22D30, 43A80, 43A85}

\author{Jan Frahm}
\address{Department of Mathematics, Aarhus University, Ny Munkegade 118, 8000 Aarhus C, Denmark}
\email{frahm@math.au.dk}

\author{Clemens Weiske}
\address{Mathematical Sciences, Chalmers University of Technology and Mathematical Sciences, G\"oteborg University, SE-412 96 G\"oteborg, Sweden}
\email{weiske@chalmers.se}

\author{Genkai Zhang}
\address{Mathematical Sciences, Chalmers University of Technology and Mathematical Sciences, G\"oteborg University, SE-412 96 G\"oteborg, Sweden}
\email{genkai@chalmers.se}

\thanks{The first named author was supported by a research grant from
  the Villum Foundation (Grant No. 00025373), the second named author
  was supported by a research grant from the Knut and Alice Wallenberg
  foundation (KAW 2020.0275) and the third named author was supported
  partially by the Swedish Research Council (VR, Grants 2018-03402,  2022-02861).}

\maketitle

\section*{Introduction}

\emph{Minimal representations} of simple Lie groups are well studied and have several equivalent descriptions. The most natural one is by their relation to minimal nilpotent coadjoint orbits via the orbit philosophy. They are often unique and show up naturally in relation to the theta correspondence, unipotent representations, and the quantization of nilpotent coadjoint orbits. Moreover, they occur as the local archimedean components of certain automorphic representations of reductive groups over global fields. Minimal representations of real groups have been studied extensively from various different perspectives such as classical harmonic
analysis, partial differential equations, complex analysis or conformal geometry (see
e.g. \cite{BSKO12,FL12,HKMO12,HSS12,KM11,KO03a,KO03c}). From the
representation theoretic point of view, one particularly important
question in this context is how minimal representations decompose when
restricted to certain subgroups. If the subgroup arises from a dual pair,
this question falls into the framework of the celebrated theta
correspondence (see e.g. \cite{How89,HPS96,KO03b,Li99}). On the other
hand, the restriction to symmetric subgroups has also turned out to
reveal interesting new results (see e.g. \cite{Kob22,MO15}).

Much less studied are \emph{next-to-minimal representations};
these ought to correspond to nilpotent coadjoint orbits whose closure is the union of the orbit itself and the trivial and minimal orbits, see the precise
definition below. For some groups it has been shown that certain next-to-minimal representations also occur as local archimedean components of global automorphic representations (see \cite{GMV15}), and their automorphic realizations seem to be of growing interest, in particular for exceptional groups (see \cite{BP17,GGKPS21,GGKPS22,Pol22}). It is therefore desirable to gain a better understanding of next-to-minimal representations, both globally and locally. The purpose of this paper is to study some branching laws for next-to-minimal representations of Hermitian Lie groups.

While minimal representations of Hermitian Lie groups turn out to be
unitary highest or lowest weight representations, there do exist
next-to-minimal representations which are neither highest nor lowest
weight representations. This makes them more difficult to construct
and understand. In our previous work~\cite{FWZ22}, where we studied
Heisenberg parabolically induced representations of Hermitian groups, we exhibited
some interesting unitary representations showing up at the end of the
complementary series.
We proved that the representations can be realized on Hilbert spaces of distributions on the Heisenberg group whose Weyl transforms have rank one as operators on Fock spaces. Thus they have similar properties as the last point in the Wallach set for scalar unitary highest weight representations described using the Euclidean Fourier transform \cite{FK94,How82,H08}.

In this paper, we show that they are in fact next-to-minimal representations. For the Hermitian groups $\SO_0(2,n)$ and $E_{6(-14)}$ we further study corresponding branching problems when restricting these next-to-minimal representations to certain symmetric subgroups.

Let us describe our results in more detail.

\subsection*{Next-to-minimal representations}

Let $\frakg$ be a real form of a complex simple Lie algebra $\frakg^\CC$ and $(\frakg^\CC)^*$ the dual space of $\frakg^\CC$. The nilpotent cone in $(\frakg^\CC)^*$ decomposes into finitely many nilpotent coadjoint orbits, and among them there is precisely one of minimal dimension, $\calO_\min$. Note that the closure of $\calO_\min$ equals $\calO_\min\cup\{0\}$. The following definition is in the spirit of \cite{GGKPS22,GMV15}:

\begin{definition*}
A nilpotent coadjoint orbit $\calO\subseteq(\frakg^\CC)^*$ is called \emph{next-to-minimal} if its closure is equal to $\calO\cup\calO_\min\cup\{0\}$. An irreducible unitary representation $\pi$ of a Lie group $G$ with Lie algebra $\frakg$ is called \emph{next-to-minimal} if its associated variety in $(\frakg^\CC)^*$ is the closure of a next-to-minimal nilpotent coadjoint orbit.
\end{definition*}

Now let $G$ be a simple Hermitian Lie group with $\frakg\not\simeq\sp(n,\RR)$, i.e. $\frakg$ is one of the following Lie algebras:
$$ \su(p,q), \quad \so(2,n), \quad \so^*(2n), \quad \frake_{6(-14)}, \quad \frake_{7(-25)}. $$
Up to conjugation, $G$ has a unique maximal parabolic subgroup $P=MAN$ whose unipotent radical $N$ is a Heisenberg group. We consider the degenerate principal series representations
$$ \pi_\nu = \Ind_P^G(1\otimes e^\nu\otimes 1) \qquad (\nu\in(\fraka^\CC)^*), $$
where $\fraka$ denotes the Lie algebra of $A$. Here, $\pi_\nu$ is
normalized such that $\pi_\nu$ contains the trivial representation as
a quotient for $\nu=\rho$ and as a subrepresentation for $\nu=-\rho$, $\rho$ being the half sum of positive roots with respect to $\fraka$, and $\pi_\nu$ is unitary for $\nu\in i\fraka^*$. Excluding the case $\frakg\simeq\su(p,q)$ with $p-q$ odd, there exists by \cite[Theorem 4.1]{FWZ22} an interval $(-\nu_0,\nu_0)\subseteq\fraka^*$ such that $\pi_\nu$, $\nu\in\fraka^*$, is irreducible and unitarizable if and only if $\nu\in(-\nu_0,\nu_0)$. Let $A_\nu:\pi_\nu\to\pi_{-\nu}$ denote the Knapp--Stein standard intertwining operators.

\begin{thmalph}[see Section~\ref{sec:AssociatedVarieties}]
	$\pi_\ntm=\Ker A_{-\nu_0}=\im A_{\nu_0}\subseteq\pi_{-\nu_0}$ is a proper irreducible and unitarizable subrepresentation which is spherical and next-to-minimal. Its $K$-type decomposition is given in Theorem~\ref{thm:CS_submodule}.
\end{thmalph}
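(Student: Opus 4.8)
The plan is to separate the ``soft'' assertions (properness, irreducibility, unitarizability, sphericity) from the computation of the associated variety, which is the essential new point. For the soft part I would proceed as follows. By \cite[Theorem 4.1]{FWZ22} the series $\pi_\nu$ is reducible at $\nu=\pm\nu_0$, and $\pi_{-\nu_0}$ has a unique proper nonzero submodule, necessarily irreducible, carrying the unitary structure whose Hilbert space model on the Heisenberg group was constructed there. Since the normalized Knapp--Stein operators satisfy $A_{-\nu_0}\circ A_{\nu_0}=0$ at the reduction point (the $c$-function scalar vanishing there), we get $\im A_{\nu_0}\subseteq\Ker A_{-\nu_0}$; as both are proper nonzero submodules of $\pi_{-\nu_0}$ they coincide with the unique proper submodule, identifying $\pi_\ntm=\Ker A_{-\nu_0}=\im A_{\nu_0}$ as irreducible. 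Unitarizability is inherited from the complementary series: the positive invariant form on $\pi_\nu$ for $\nu\in(-\nu_0,\nu_0)$ extends continuously to $\nu=\nu_0$ as a positive semidefinite form with radical $\Ker A_{\nu_0}$, inducing a positive definite invariant form on $\pi_\ntm\cong\pi_{\nu_0}/\Ker A_{\nu_0}$. Sphericity follows because the spherical $c$-function is finite and nonzero at $\nu_0$, so $A_{\nu_0}$ sends the spherical vector of $\pi_{\nu_0}$ to a nonzero multiple of that of $\pi_{-\nu_0}$, placing the spherical vector in $\im A_{\nu_0}=\pi_\ntm$.

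The real content is that $\pi_\ntm$ is next-to-minimal, and here the plan is to compute its associated variety through its $K$-types. Since $G$ is Hermitian, $\frakp_\CC=\frakp^+\oplus\frakp^-$ and the associated variety $\calV(\pi_\ntm)$ is a $K_\CC$-stable closed subvariety of the nilpotent cone in $\frakp_\CC$, of dimension equal to the Gelfand--Kirillov dimension $d$ of $\pi_\ntm$. By Borho--Brylinski the $G_\CC$-saturation $G_\CC\cdot\calV(\pi_\ntm)$ is the closure of a single nilpotent coadjoint orbit $\calO$, namely the associated variety of the annihilator ideal, and by the Kostant--Sekiguchi dimension count $\dim_\CC\calO=2d$; this $\calO$ is the orbit whose closure I must show equals $\calO\cup\calO_\min\cup\{0\}$. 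First I would establish the $K$-type decomposition of Theorem~\ref{thm:CS_submodule}, by restricting the known decomposition of $\pi_{-\nu_0}$ and cutting out the submodule $\pi_\ntm$. From it I would read off the leading-order growth of the multiplicities, i.e.\ sum $\dim\tau_\mu$ over the occurring highest weights $\mu$ filtered by $|\mu|$, to obtain $d=\dim\calV(\pi_\ntm)$, and hence $\dim_\CC\calO=2d$.

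To finish I would match this dimension with the orbit data of $\frakg^\CC$. Using the containment $\calO\subseteq\overline{\calO_R}$, where $\overline{\calO_R}$ is the associated variety of the full degenerate principal series (the Richardson orbit of the Heisenberg parabolic), together with $\calO\neq\{0\}$ (as $\pi_\ntm$ is infinite dimensional) and $\dim_\CC\calO>\dim_\CC\calO_\min$ (so $\calO\neq\calO_\min$, consistent with $\pi_\ntm$ being spherical rather than a highest weight module), I would identify $\calO$ as the orbit of dimension $2d$ inside $\overline{\calO_R}$ that covers $\calO_\min$, hence as the next-to-minimal orbit. I expect the main obstacle to be twofold: carrying out the $K$-type multiplicity sum cleanly enough to extract $d$ \emph{exactly} rather than as a bound, and verifying case by case --- for $\su(p,q)$, $\so(2,n)$, $\so^*(2n)$, $\frake_{6(-14)}$ and $\frake_{7(-25)}$ --- that the orbit of the resulting dimension really is next-to-minimal, i.e.\ that its closure contains no nilpotent orbit besides $\calO_\min$ and $\{0\}$. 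This last step relies on the classification of nilpotent orbits and their closure order for these simple Lie algebras, and is where most of the case analysis will concentrate.
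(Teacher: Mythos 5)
Your proposal follows essentially the same route as the paper: the properness, irreducibility, unitarizability, sphericity and the $K$-type decomposition are imported from Part I (\cite{FWZ22}, recalled here as Theorem~\ref{thm:CS_submodule}), and next-to-minimality is established exactly as you describe, by extracting the Gelfand--Kirillov dimension from the polynomial growth of the $K$-type dimensions (giving $\GKDIM(\pi_\ntm)=a_1+4b_1+4$ outside type $A$) and matching $2\cdot\GKDIM(\pi_\ntm)$ case by case against the dimensions of nilpotent orbits in \cite{CMG93}. The only cosmetic difference is that the paper does not need your Richardson-orbit containment or the closure-order analysis: in each case the nilpotent orbit of dimension $2\cdot\GKDIM(\pi_\ntm)$ turns out to be unique, and one reads off from the classification that it is next-to-minimal.
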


For $G=\upO(p,q)$ with $\min(p,q)\geq4$, a next-to-minimal
representation was constructed in \cite{ZH97}, but their construction
does not extend to the case $\min(p,q)=2$. The reason is that
$\upO(p,q)$ has two different next-to-minimal nilpotent coadjoint
orbits. For $\min(p,q)=2$ the next-to-minimal orbit considered in \cite{ZH97} does not have real points, so there cannot exist a unitary representation whose associated variety equals this orbit. Our next-to-minimal representation has the other next-to-minimal orbit as associated variety.

We further remark that whenever the rank of $G$ is at least $3$, the
analytic continuation of the scalar type holomorphic discrete series
of $G$ contains a next-to-minimal representation, namely the one
corresponding to the next-to-minimal discrete point in the Wallach
set. Our representation is different from this one as it is neither
highest nor lowest weight module.

\subsection*{Branching $G\searrow\SL(2,\RR)\times M$}

If $P=MAN$ is a Langlands decomposition of the Heisenberg parabolic subgroup $P$ of $G$, then the centralizer of $M$ in $G$ is a subgroup locally isomorphic to $\SL(2,\RR)$. In fact, the two subgroups $M$ and $\SL(2,\RR)$ form a dual pair inside $G$. We study the restriction of $\pi_\ntm$ to $\SL(2,\RR)\times M$ for the two cases $G=\SO_0(2,n)$ and $G=E_{6(-14)}$.

Let us first consider $G=\SO_0(2,n)$, then
$M=\SL(2,\RR)\times\SO(n-2)$. In fact, the subgroup $\SL(2,\RR)$ of
$M$ is conjugate to the centralizer of $M$, and we simply write
$\SL(2,\RR)\times M$ as $\SL(2,\RR)\times\SL(2,\RR)\times\SO(n-2)$
without distinguishing between the two copies of $\SL(2,\RR)$. To
state the decomposition of $\pi_\ntm$ let $\eta_k^{\SO(n-2)}$ denote
the irreducible representation of $\SO(n-2)$ on the space
$\calH^k(\RR^{n-2})$ of harmonic homogeneous polynomials on
$\RR^{n-2}$ of degree $k$. Moreover, for $\mu\in i\RR$ and $\varepsilon\in\ZZ/2\ZZ$ let $\tau_{\mu,\varepsilon}^{\SL(2,\RR)}$ be the unitary principal series of $\SL(2,\RR)$, spherical for $\varepsilon=0$ and non-spherical for $\varepsilon=1$, and for $\ell\in\ZZ$, $|\ell|\geq2$ let $\tau_\ell^{\SL(2,\RR)}$ be the discrete series of $\SL(2,\RR)$ of parameter $\ell$.

\begin{thmalph}[see Theorem~\ref{thm:RestrictionOfSO(2,n)toMxSL2}]\label{thm:IntroThmC}
	The restriction of the next-to-minimal representation $\pi_\ntm$ of $G=\SO_0(2,n)$ to $\SL(2,\RR)\times\SL(2,\RR)\times\SO(n-2)$ is given by
	\begin{multline*}
		\pi_\ntm|_{\SL(2,\RR)\times\SL(2,\RR)\times\SO(n-2)} \simeq \bigoplus_{k=0}^\infty\Bigg(\int_{i\RR_{\geq0}}\tau_{\mu,\operatorname{par}(\mu)}^{\SL(2,\RR)}\boxtimes\tau_{\mu,\operatorname{par}(\mu)}^{\SL(2,\RR)}\,d\mu\\
		\oplus\bigoplus_{\substack{2\leq|\ell|\leq k\\\ell\equiv k\mod2}}\tau_\ell^{\SL(2,\RR)}\boxtimes\tau_\ell^{\SL(2,\RR)}\Bigg)\boxtimes\eta_k^{\SO(n-2)}.
	\end{multline*}
\end{thmalph}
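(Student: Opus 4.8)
The plan is to reduce the branching problem to classical harmonic analysis by choosing an explicit $L^2$-model for $\pi_\ntm$ in which the three factors $\SL(2,\RR)_{\mathrm{cent}}$, the $\SL(2,\RR)$ inside $M$, and $\SO(n-2)$ act as transparently as possible. Concretely, I would start from the realization of $\pi_\ntm$ on distributions on the Heisenberg group $N$ with rank-one Weyl transform established in \cite{FWZ22}, and pass to the associated Fock picture: writing the horizontal part of $N$ as $\RR^2\otimes\RR^{n-2}$ with $\SL(2,\RR)$ acting on $\RR^2$ and $\SO(n-2)$ on $\RR^{n-2}$, the rank-one condition identifies the representation space with a space of functions of a single radial frequency variable valued in a Fock-type module over $\calF(\CC^{n-2})$. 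I would then record the explicit Lie-algebra action of all three factors on this model, using that $\SO_0(2,2)\cong(\SL(2,\RR)\times\SL(2,\RR))/\{\pm1\}$, so that the target subgroup is exactly the symmetric subgroup $\SO_0(2,2)\times\SO(n-2)$ of $G$.

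The first reduction is to decompose under the compact factor $\SO(n-2)$. Expanding the Fock module into $\SO(n-2)$-harmonics produces the outer direct sum over $k\geq0$, with the $k$-th isotypic component carrying the representation $\eta_k^{\SO(n-2)}$ together with a multiplicity space $V_k$ on which $\SL(2,\RR)_{\mathrm{cent}}\times\SL(2,\RR)$ acts; the harmonic degree $k$ enters the radial operators as a centrifugal parameter. The theorem then amounts to decomposing each $V_k$ under $\SL(2,\RR)\times\SL(2,\RR)$.

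The second, and main, step is the spectral decomposition of $V_k$, where two points must be established. First, the diagonal structure $\tau\boxtimes\tau$: I would prove a Casimir identity on $\pi_\ntm$ showing that the two $\SL(2,\RR)$-Casimir operators act by the same scalar on every joint constituent, so that $\SL(2,\RR)_{\mathrm{cent}}$ and $\SL(2,\RR)\subseteq M$ contribute the same infinitesimal parameter in each summand; this is a manifestation of the seesaw relation between the dual-pair member $M$ and its centralizer. Second, the explicit spectrum: after diagonalizing the generators, the decomposition of $V_k$ reduces to the Plancherel decomposition of a second-order operator of Jacobi/hypergeometric type, whose continuous spectrum yields $\int_{i\RR_{\geq0}}\tau_{\mu,\operatorname{par}(\mu)}^{\SL(2,\RR)}\boxtimes\tau_{\mu,\operatorname{par}(\mu)}^{\SL(2,\RR)}\,d\mu$ and whose finitely many $L^2$-eigenfunctions (bound states) give the discrete series contributions $\tau_\ell^{\SL(2,\RR)}\boxtimes\tau_\ell^{\SL(2,\RR)}$. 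The parity constraint $\ell\equiv k\bmod2$ reflects the parity of these eigenfunctions, while the range $2\leq|\ell|\leq k$ is precisely the number of bound states admitted by the $k$-dependent centrifugal term.

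I expect this last spectral analysis to be the main obstacle: pinning down the exact Plancherel measure on the continuous part (and hence the correct parity function $\operatorname{par}(\mu)$) and proving that the discrete spectrum is exactly $\{\ell : 2\leq|\ell|\leq k,\ \ell\equiv k\bmod2\}$, with neither missing nor spurious bound states. For the discrete part I would cross-check against the known $K$-type decomposition of $\pi_\ntm$ from Theorem~\ref{thm:CS_submodule}: restricting $K$-types to $K\cap(\SO_0(2,2)\times\SO(n-2))$ and matching the minimal $K$-types of the candidate summands both confirms the diagonal form $\tau_\ell\boxtimes\tau_\ell$ (fixing also the sign of $\ell$, which the Casimir alone does not) and forces the stated range and parity, whereas the continuous family, being non-discretely-decomposable, is isolated as the orthogonal complement and identified through its infinitesimal character.
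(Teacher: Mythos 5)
Your overall frame---the Heisenberg--Fock $L^2$-model of $\pi_\ntm$, the reduction along $\SO(n-2)$-harmonics, and the expectation of a continuous family plus finitely many bound states indexed by the harmonic degree $k$---agrees with the paper, and your ``Jacobi/hypergeometric spectral problem'' is in substance Repka's tensor product formula for $\omega_{\met,0}^{\mp}\otimes\omega_{\met}^{\pm}|_{\SL(2,\RR)_M}$, which the paper simply quotes (Lemma~\ref{lemma:tensor_product_met}). The genuine gap lies in how you couple the two $\SL(2,\RR)$-factors. Your linchpin is a ``Casimir identity'' forcing equal infinitesimal parameters in the two factors, justified by an appeal to seesaw duality; but no such identity follows from general dual-pair principles here, and the one fact you could extract cheaply---that the two copies of $\SL(2,\RR)$ are conjugate in $G$ (Proposition~\ref{prop:SL(2,R)_conjugate}), so their Casimirs are unitarily equivalent operators on $\pi_\ntm$---gives only equality of spectra, not equality of eigenvalues on each joint constituent. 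Proving the identity directly would require the explicit action of $E\in\frakn_2$ on the Fourier-transformed model (a second-order Bessel-type operator), which is exactly the computation the paper's argument is designed to avoid. Your fallback, matching $K$-types against Theorem~\ref{thm:CS_submodule}, cannot close this gap: the restriction is not discretely decomposable, every irreducible representation of $K\cap(\SL(2,\RR)_A\times\SL(2,\RR)_M\times\SO(n-2))$ occurs in $\pi_\ntm$ with infinite multiplicity, and $K$-finite vectors do not lie in the direct integrands; so restricting $K$-types neither isolates the continuous part nor excludes off-diagonal or wrong-sign combinations inside the direct integral, and at best constrains the support of honest discrete summands.

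What the paper does instead, and what your proposal is missing, is the observation that the parabolic subgroup $B=\exp(\RR H)\exp(\RR F)\subseteq\SL(2,\RR)_A$ acts on the frequency variable $\lambda$ by the two irreducible representations $\sigma_B^{\pm}$ of the $ax+b$-group, one on each half-line $\pm\lambda>0$ (Lemma~\ref{lemma:AN_action}), so that each discrete $M$-isotypic component of $\pi_\ntm$ is already \emph{irreducible under $B$}. Since the only unitary representations of $\SL(2,\RR)$ restricting irreducibly to $B$ are the (limits of) highest/lowest weight representations, with the holomorphic/antiholomorphic type read off from whether $\sigma_B^{+}$ or $\sigma_B^{-}$ occurs (Lemma~\ref{lemma:SL(2,R)_to_AN_restriction}), the $\SL(2,\RR)_A$-action on each such component is automatically some $\tau_{\mp l'}$ with the correct sign; the conjugacy of the two copies of $\SL(2,\RR)$ then forces $l'=l$ and also settles the continuous part. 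To make your route rigorous you would have to either carry out the Bessel-operator computation of $d\pi_\ntm(E)$ and verify the Casimir identity by hand, or replace that step by this $B$-irreducibility-plus-conjugacy argument.
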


Now let $G=E_{6(-14)}$, then $M=\SU(5,1)$. Let $P_M=M_MA_MN_M$ be a minimal parabolic subgroup of $M$. Then $M_M$ is a double cover of $\upU(4)$ whose irreducible representations are parameterized by tuples $(\nu_1,\nu_2,\nu_3,\nu_4)$ with $\nu_j\in\frac{1}{2}\ZZ$ and $\nu_i-\nu_j\in\NN$ for all $1\leq i<j\leq4$. Moreover, the irreducible unitary characters of $A_M$ are parameterized by $\mu\in i\RR$. We denote the corresponding parabolically induced representation of $\SU(5,1)$ by $\tau_{(\nu_1,\nu_2,\nu_3,\nu_4),\mu}^{\SU(5,1)}$. Keeping the notation for representations of $\SL(2,\RR)$ from the previous discussion, we can identify part of the spectrum for the decomposition of the restriction of $\pi_\ntm$ to $\SL(2,\RR)\times\SU(5,1)$:

\begin{thmalph}[see Theorem~\ref{thm:E6_MxSL(2)}]\label{thm:IntroThmD}
	The restriction of the next-to-minimal representation $\pi_\ntm$ of $E_{6(-14)}$ to $\SL(2, \mathbb{R})\times\SU(5,1)$ contains for every $k>0$ and every $0<m\leq k$ a representation of the form
	$$
	\int_{i\RR_{\geq0}}\tau_{-l}^{\SL(2,\RR)}\boxtimes\tau_{(\frac{k}{2},\frac{k}{2}-m,-\frac{k}{2},-\frac{k}{2}),\mu}\,d\mu \oplus \int_{i\RR_{\geq0}}\tau_{l}^{\SL(2,\RR)}\boxtimes\tau_{(\frac{k}{2},\frac{k}{2},m-\frac{k}{2},-\frac{k}{2}),\mu}\,d\mu
	$$
	for some $l\geq1$ as a direct summand.
\end{thmalph}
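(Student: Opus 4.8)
The plan is to work in the explicit Heisenberg-group model for $\pi_\ntm$ furnished by Theorem~A and \cite{FWZ22}, in which the dual pair $\SL(2,\RR)\times M$, $M=\SU(5,1)$, acts by commuting operators. Here $\SL(2,\RR)$ is the long-root $\sl_2$ attached to the contact grading $\frakg=\frakg_{-2}\oplus\frakg_{-1}\oplus\frakg_0\oplus\frakg_1\oplus\frakg_2$ with $\dim\frakg_{\pm2}=1$ and $\frakg_1\cong\Lambda^3\CC^6$: the generator $E\in\frakg_2$ acts by multiplication in the central variable of $N$, $F\in\frakg_{-2}$ by a second-order operator, and the grading element $H\in\frakg_0$ by a Euler-type operator, so that this $\sl_2$ acts metaplectically. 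The Levi factor $M$ acts linearly on $\frakg_1$ (preserving the Heisenberg symplectic form) and hence on the model space, commuting with the $\sl_2$. Since the Weyl transforms of vectors in $\pi_\ntm$ have rank one, the model is effectively carried by a Fock-type space over $\frakg_1$, on which both actions are transparent.

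To extract the claimed summands I would decompose $\pi_\ntm|_{\SL(2,\RR)\times M}$ through the minimal parabolic $P_M=M_MA_MN_M$ of $M$, following the pattern used for $\SO_0(2,n)$ in Theorem~\ref{thm:RestrictionOfSO(2,n)toMxSL2}. First restrict to the compact part $M_M=\widetilde{\upU(4)}$ and decompose the degree-$k$ piece of the Fock space into $M_M$-isotypic components; the tuples $(\tfrac k2,\tfrac k2-m,-\tfrac k2,-\tfrac k2)$ and $(\tfrac k2,\tfrac k2,m-\tfrac k2,-\tfrac k2)$, $0<m\le k$, are exactly the highest weights arising from the decomposition of $\Lambda^3\CC^6$ as a module for the Levi of $\SU(5,1)$, and they are contragredient to one another. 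I would then take a Mellin transform in the $A_M$-direction: this produces the continuous parameter $\mu\in i\RR$ and assembles each $M_MA_M$-type into the induced representation $\tau_{(\nu),\mu}^{\SU(5,1)}$, yielding the integrals $\int_{i\RR_{\ge0}}\cdots\,d\mu$.

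The final ingredient is the commuting $\SL(2,\RR)$-action on each of these pieces. On the subspace carrying a fixed $M_M$-type, the $\sl_2$-triple acts as a lowest- or highest-weight module rather than a principal-series module, because the rank-one/next-to-minimal structure forces the operators coming from $\frakg_{\pm2}$ to have a one-sided $H$-spectrum; this produces a single discrete series $\tau_{-l}^{\SL(2,\RR)}$ or $\tau_{l}^{\SL(2,\RR)}$, with the holomorphic and antiholomorphic halves matched respectively to the two contragredient $M_M$-weights above. The parameter $l\ge1$ is read off as the extremal $H$-eigenvalue on that isotypic component, which is precisely why the statement only asserts existence of some $l$; unitarity of $\pi_\ntm$ then guarantees that the resulting $\SL(2,\RR)\times\SU(5,1)$-subrepresentations occur as genuine direct summands rather than mere subquotients.

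The main obstacle is the explicit branching $\Lambda^3\CC^6|_{\widetilde{\upU(4)}}$ together with the attendant bookkeeping: one must match each $M_M$-type appearing in the Fock model to the correct $\SU(5,1)$-parameter $(\nu)$ and simultaneously compute the extremal $H$-weight that pins down $l$, all while using the central-character constraint to keep the two families $(\tfrac k2,\tfrac k2-m,-\tfrac k2,-\tfrac k2)$ and $(\tfrac k2,\tfrac k2,m-\tfrac k2,-\tfrac k2)$ apart. The comparatively clean $\SO_0(2,n)$ computation in Theorem~\ref{thm:RestrictionOfSO(2,n)toMxSL2} serves as a template, but the non-abelian structure of $\SU(5,1)$ and the exceptional nature of $\frakg_1$ make the weight combinatorics substantially heavier, which is also why one settles here for identifying a distinguished family of summands rather than the full Plancherel decomposition.
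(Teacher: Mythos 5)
Your overall route is the paper's: pass to the Heisenberg--Fourier model, reduce the $M$-action to the tensor product $\omega_{\met,0}^{\mp}\otimes\omega_{\met}^{\pm}|_M$, decompose that into principal series of $\SU(5,1)$, and then exploit the commuting $\SL(2,\RR)_A$ together with the one-sidedness of the central parameter $\lambda$ to force discrete series on the first factor. But the central step is missing. Your ``Mellin transform in the $A_M$-direction'' presupposes a realization of $\pi_{-3\lambda_5}^{*}\otimes\pi_{-k\delta_0-3\lambda_5}$ on which $A_M$ acts geometrically, and constructing that realization is precisely the hard analytic content: the paper proves (Proposition~\ref{prop:TensorProductForE6}) that this tensor product is unitarily equivalent to $L^2(\SU(5,1)/\upU(5),-k\delta_0)=\Ind_{\upU(5)}^{\SU(5,1)}(-k\delta_0)$, via an explicit restriction-to-the-diagonal operator $R$, an $L^2$-estimate on polynomial vectors, and a polar-decomposition argument. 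Without this intertwiner there is no $A_M$ acting anywhere in the Fock model. Moreover, even granting that identification, you must rule out discrete spectrum in $L^2(\SU(5,1)/\upU(5),-k\delta_0)$ before asserting that only direct integrals of principal series appear; the paper does this (Proposition~\ref{prop:DecompositionL2SU(5,1)/U(5)}) by showing no discrete series of $\SU(5,1)$ has lowest $K$-type $W_{-k\delta_0}$, and then invokes the Plancherel formula together with the branching $\upU(5)\downarrow\upU(4)$ to produce exactly the tuples $(\tfrac k2,\tfrac k2-m,-\tfrac k2,-\tfrac k2)$. Your attribution of these tuples to a decomposition of $\Lambda^3\CC^6$ under ``the Levi of $\SU(5,1)$'' conflates $M_M\simeq\widetilde{\upU(4)}$ with the maximal compact $\upU(5)$ and does not substitute for this computation.

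The last step also needs sharpening. A discrete series of $\SL(2,\RR)$ does not have one-sided spectrum for the split element $H$; the correct mechanism is that on each summand the central variable $\lambda$ is supported on a half-line, so the parabolic $B=\exp(\RR H)\exp(\RR F)$ acts irreducibly as $\sigma_B^{\pm}$, and by Lemma~\ref{lemma:SL(2,R)_to_AN_restriction} the only unitary irreducibles of $\SL(2,\RR)$ restricting to a single $\sigma_B^{\pm}$ are the discrete series $\tau_{\mp l}$ (principal series restrict to $\sigma_B^+\oplus\sigma_B^-$). To conclude that $\SL(2,\RR)_A$ actually preserves each $\SU(5,1)$-isotypic piece and that $l$ is constant along each direct integral, you additionally need the multiplicity-one property of the decomposition in Proposition~\ref{prop:DecompositionL2SU(5,1)/U(5)} and the fact that the discrete series are separated points in the unitary dual of $\SL(2,\RR)$; neither appears in your sketch, and both are used in the paper's proof of Theorem~\ref{thm:E6_MxSL(2)}.
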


The proofs of both Theorem~\ref{thm:IntroThmC} and \ref{thm:IntroThmD}
make use of the realization of $\pi_\ntm$ in terms of the Heisenberg
group Fourier transform.
Indeed, since the restriction problem is mostly of analytic nature, the Fourier transform is an effective tool. It relates the decomposition of $\pi_\ntm|_M$ to the decomposition of the tensor product of some lowest/highest
weight representations of $M$ that occur in the restriction to $M$ of
a metaplectic representation. In both cases this tensor
product can be decomposed completely. Finally, the contribution of the $\SL(2,\RR)$-factor is obtained by conjugating $\SL(2,\RR)$ inside $G$ to a subgroup of $M$ and comparing with the decomposition when restricting $\pi_\ntm$ to $M$. For $G=\SO_0(2,n)$ we can identify the action of the $\SL(2,\RR)$-factor explicitly while for $G=E_{6(-14)}$ we are only able to describe its restriction to a parabolic subgroup of $\SL(2,\RR)$.\\

\subsection*{Structure of this article} We first recall the main results of our previous work~\cite{FWZ22} in Section~\ref{sec:Preliminiaries}. In Section~\ref{sec:AssociatedVarieties} we compute the Gelfand--Kirillov dimension of $\pi_\ntm$ using its $K$-type decomposition and use this information to show that the associated variety of $\pi_\ntm$ is indeed a next-to-minimal nilpotent coadjoint orbit in $(\frakg^\CC)^*$. Finally, Section~\ref{sec:BranchingGtoMxSL(2,R)} is concerned with the restriction of the next-to-minimal representation $\pi_\ntm$ of $G$ to the subgroup $\SL(2,\RR)\times M$. While Section~\ref{sec:FourierTransformForBranching} contains some general observation on how to use the Heisenberg Fourier transform for this problem, we treat the case $G=\SO_0(2,n)$ in Section~\ref{sec:BranchingSO(2,n)toMxSL(2,R)}, and in Section~\ref{sec:BranchingE6toMxSL(2,R)} we discuss the case $G=E_{6(-14)}$.

\subsection*{Notation}
For a unitarizable Casselman--Wallach representation $\pi$ we abuse notation and denote its unitary closure also by $\pi$, while usually suppressing the notation of the underlying Fr\'{e}chet resp. Hilbert spaces.
In this sense, if not stated otherwise, all direct sums and tensor products of representations are to be considered in the category of Hilbert spaces.

\section{Preliminaries}\label{sec:Preliminiaries}

We recall the results about Heisenberg parabolically induced representations of Hermitian Lie groups from \cite{Zha22} as well as their realization using the Heisenberg group Fourier transform obtained in \cite{FWZ22}.

\subsection{Heisenberg parabolically induced representations}

Let $G$ be a simple Hermitian Lie group with Lie algebra $\frakg$ not
isomorphic to $\sl(2,\RR)$. Then $G$ has a unique conjugacy class of
parabolic subgroups whose unipotent radical is a Heisenberg group. Let
$P=MAN$ be the Langlands decomposition
of one of them and write $\frakm$, $\fraka$, $\frakn$ for the Lie algebras of $M$, $A$, $N$.

The one-dimensional subalgebra $\fraka$ is spanned by an element $H\in\fraka$ such that $$\frakg=\bar{\frakn}_2\oplus \bar{\frakn}_1\oplus(\frakm\oplus \fraka)\oplus \frakn_1\oplus \frakn_2$$ is a decomposition into eigenspaces of $\ad(H)$ with eigenvalues $-2,-1,0,1,2$ and $\frakn=\frakn_1\oplus \frakn_2$. We further write $\bar{\frakn}=\bar{\frakn}_1\oplus\bar{\frakn}_2$ for the Lie algebra of the opposite unipotent radical $\bar{N}$. 
Note that $\frakn_2$ and $\bar{\frakn}_2$ equal the center of $\frakn$ and $\bar{\frakn}$ respectively and are therefore one-dimensional and we choose $E\in \frakn_2$ and $F\in \bar{\frakn}_2$ such that $[E,F]=H$. Hence $\{E,H,F\}$ form an $\sl(2)$-triple.
We identify $(\fraka^\CC)^*$ with $\CC$ by $\nu\mapsto\nu(H)$, then the half sum of positive roots is given by $\rho=d_1+1$, where $d_1=\frac{1}{2}\dim_\RR\frakn_1\in\NN$ (see Table~\ref{tab:2}). Consider the degenerate principal series representations (smooth normalized parabolic induction)
$$ \pi_\nu = \Ind_P^G(1\otimes e^\nu\otimes1) \qquad\qquad (\nu\in(\fraka^\CC)^*\simeq\CC),$$
on the space $$I(\nu)=\{f\in C^\infty(G); \; f(gman)=a^{-\nu-\rho}f(g)\, \forall man \in MAN\}.$$

To describe the $K$-type structure of $\pi_\nu$, let $\theta$ be a Cartan
involution on $G$ which leaves $MA$ invariant and maps $E$ to $-F$ and denote by
$K\subseteq G$ the corresponding maximal compact subgroup. Its Lie
algebra $\frakk$ decomposes into
$\frakk=\frakz(\frakk)
\oplus\frakk'$ with $\frakk'=[\frakk,\frakk]$ the semisimple part and $\frakz(\frakk)$ the
center.

The Lie algebra $\frakk'$ has a Hermitian symmetric pair $(\frakk', \fl')$ of rank $2$. Let $2\alpha_1>2\alpha_2$ be the Harish-Chandra strongly orthogonal roots and note that the restricted root system for the pair $(\frakk',\frakl')$ is of type $C_2$ or $BC_2$:
$$ \{\pm2\alpha_1,\pm2\alpha_2\}\cup\{\pm\alpha_1\pm\alpha_2\}\quad\Big[\cup\{\pm\alpha_1,\pm\alpha_2\}\Big]. $$
The multiplicity of $\pm2\alpha_1$ and $\pm2\alpha_2$ is $1$, and we write $a_1$ resp. $2b_1$ for the root multiplicities of $\pm\alpha_1\pm\alpha_2$ resp. $\pm\alpha_1$ and $\pm\alpha_2$. The values of $a_1$ and $b_1$ for the different Hermitian groups are listed in Table~\ref{tab:2}.

We further let $\alpha_0$ be a linear functional on the center $\frakz(\frakk^\CC)$ normalized such that it has value $1$ on the central element with eigenvalues $\pm1$ on $\frakp^\CC$. Using the notation $W_{\mu_1,\mu_2,\ell}=W_{\mu_1\alpha_1+\mu_2\alpha_2+\ell\alpha_0}$, the $K$-type decomposition of $\pi_\nu$ can be written as follows:
	\begin{align*}
		& \pi_\nu|_K\simeq
			\bigoplus_{\substack{\mu_1\geq\mu_2\geq|\ell|\\\mu_1\equiv\mu_2\equiv\ell\mod2}}W_{\mu_1,\mu_2,\ell} &&\text{if $\frakg \not\simeq\sp(n,\RR),\su(p,q)$,}\\
		& \pi_\nu|_K\simeq
		\bigoplus_{\substack{\mu_1, \mu_2\geq \abs{\ell},\\ \mu_1\equiv \mu_2 \equiv \ell \mod 2}} W_{\mu_1,\mu_2,\ell} &&\text{if $\frakg \simeq \su(p,q)$.}
	\end{align*}

\begin{table}[h!]
	\begin{center}
		\begin{tabular}{|l|l|l|l|l|}
			\hline
			$\frakg$ & $\frakk$ & $\frakm$ & $(a_1,b_1)$ & $d_1$\\
			\hline
			$\su(p,q)$ & $\fraks(\fraku(p)+\fraku(q))$ & $\fraku(p-1,q-1)$ & $(0, p-2), (0, q-2)$ & $p+q-2$\\ 
			\hline
			$\so^\ast(2n)$ & $ \fraku(n)$ & $\so^\ast(2n-4)+\su(2)$ & $(2, n-4)$ & $2n-4$\\
			\hline
			$\sp(n, \mathbb R)$ & $\fraku(n)$ & $\sp(n-1,\mathbb R)$ & $ (0, n-2)$ & $n-1$\\
			\hline
			$\so(n, 2)$ & $\so(n)+\so(2)$ & $\sl(2,\RR)+\so(n-2)$ & $(n-4, 0)$ & $n-2$\\ 
			\hline  
			$\frake_{6(-14)}$ & $\spin(10)+\so(2)$ & $\su(5,1)$ & $(4, 2)$ & $10$\\ 
			\hline
			$\frake_{7(-25)}$ & $\frake_6+\so(2)$ & $\so(10,2)$ & $(6, 4)$ & $16$\\ 
			\hline  
		\end{tabular}
		\vskip0.20cm
		\caption
		{Hermitian Lie algebras $\frakg$ with subalgebras $\frakk$ and $\frakm$ and structure constants $a_1$, $b_1$ and $d_1$}
		\label{tab:2}
	\end{center}
\end{table}

\subsection{Intertwining operators}

Let
$$ w_0=\exp\left(\frac{\pi}{2}(E-F)\right)\in K, $$
then $\Ad(w_0)P=\bar{P}=MA\bar{N}$ is the parabolic subgroup opposite to $P$. Let $A_\nu: I(\nu) \to I(-\nu)$ be the standard intertwining operator, which is for $\Re(\nu)>\rho$, $f\in I(\nu)$ and $g\in G$ given by the convergent integral
$$ A_\nu f(g)=\int_{\bar{N}}f(gw_0\bar{n})\, d\bar{n} $$
and extended meromorphically to all $\nu\in\CC$. For $\nu\in\RR$, the corresponding Hermitian form on $I(\nu)$ given by
\begin{equation}\label{eq:DefInvForm}
	\langle f_1,f_2\rangle_\nu = \langle A_\nu f_1,f_2\rangle_{L^2(\bar{N})} = \int_{\bar{N}} A_\nu f_1(\bar{n})\overline{f_2(\bar{n})}\,d\bar{n}
\end{equation}
is $G$-invariant. In \cite{FWZ22} we studied unitarizability and the composition series of the representations $\pi_\nu$ using the operators $A_\nu$. We recall the main results.

\begin{theo+}[\cite{FWZ22} Theorem~4.1 and Theorem~4.3]\label{thm:CS_submodule}
	Assume $\frakg\not\simeq\sp(n,\RR)$ and let $$\nu_0=\begin{cases}
		1 & \text{if $\frakg \simeq \su(p,q)$ and $p-q$ is even,}\\
		a_1+1 & \text{if $\frakg \not\simeq \su(p,q)$.}
	\end{cases}$$
\begin{enumerate}[label=(\roman*)]
	\item For $\nu\in\RR$, the representation $\pi_\nu$ belongs to the complementary series, i.e. it is irreducible and unitarizable, if and only if $\abs{\nu}<\nu_0$.
	\item At the endpoint $-\nu_0$ of the complementary series there is a proper unitarizable subrepresentation
	$\pi_\ntm:=\Ker A_{-\nu_0}=\im A_{\nu_0}$ with  $K$-type decomposition
	\begin{align*}
		& \pi_\ntm|_K\simeq
			\bigoplus_{\substack{\mu \geq|\ell|\\\mu\equiv\ell\mod2}}W_{\mu,\mu,\ell} &&\text{if $\frakg \not\simeq \su(p,q)$,}\\
		& \pi_\ntm|_K\simeq
		\bigoplus_{\substack{\mu_1, \mu_2\geq \abs{\ell},\\ \mu_1-\mu_2=q-p, \\ \mu_1\equiv \mu_2 \equiv \ell \mod 2}} W_{\mu_1,\mu_2,\ell} &&\text{if $\frakg \simeq \su(p,q)$, $p-q$ even.}
	\end{align*}
\end{enumerate}
\end{theo+}

\subsection{The Heisenberg group Fourier transform}\label{sec:Heisenberg_Fourier}

Since $\bar{N}MAN$ is open and dense in $G$, the restriction from $G$ to $\bar{N}$ defines an embedding of $I(\nu)$ into $C^\infty(\bar{N})$, the so-called \emph{non-compact picture of $\pi_\nu$}.

Following \cite{FWZ22} we further use the
notation $V_1=\bar{\frakn}_1$ and note that $V_1$ has a canonical
complex structure given by the complex structure for the Hermitian symmetric
subpair $(\frakm, \frakm\cap \frakk)$ of $(\frakg, \frakk)$. We can identify $\bar{\frakn}=\bar{\frakn}_1\oplus \bar{\frakn}_2=V_1\oplus\RR F$ with $V_1\times\RR$ by
$$ (v,t)\mapsto v+tF \qquad (v\in V_1,t\in\RR). $$

The infinite-dimensional irreducible unitary representations of $\bar{N}$ are parameterized by their central character in $i\bar{\frakn}_2^*$. We identify ${\frakn}_2^*$ with $\RR^\times$ by $\lambda\mapsto\lambda(F)$. Write $\sigma_\lambda$ for the representation with central character $-i\lambda$ which we realize on a Fock space $\calF_\lambda(V_1)$ of (anti-)holomorphic (depending on the sign of $\lambda$) functions on $V_1$ which are square-integrable with respect to a Gaussian measure.

The Heisenberg group Fourier transform is the unitary isomorphism
$$ \calF:L^2(\bar{N}) \to \int_{\RR^\times}\calF_\lambda(V_1)\otimes\calF_\lambda(V_1)^*\,d\lambda $$
given by
$$ \calF u(\lambda) = \sigma_\lambda(u) = \int_{\bar{N}}u(\bar{n})\sigma_\lambda(\bar{n})\,d\bar{n} \qquad (u\in L^1(\bar{N})\cap L^2(\bar{N})), $$
where we identify the Hilbert space tensor product $\calF_\lambda(V_1)\otimes\calF_\lambda(V_1)^*$ with the space of Hilbert--Schmidt operators on $\calF_\lambda(V_1)$. More precisely, using the Hilbert--Schmidt norm $\|T\|_{\operatorname{HS}}=\tr(TT^*)^{\frac{1}{2}}$ we have
\begin{equation}\label{eq:PlancherelHeisenberg}
	\|u\|_{L^2(\bar{N})}^2 = \const\times\int_{\RR^\times}\|\sigma_\lambda(u)\|_{\operatorname{HS}}^2|\lambda|^{d_1}\,d\lambda \qquad (u\in L^2(\bar{N})),
\end{equation}
the constant only depending on the normalization of measures and the dimension $d_1=\dim_\CC V_1$. By \cite[Corollary 3.5.3]{Fra22} the Fourier transform can be extended to a continuous linear map
$$ \calF:\calS'(\bar{N}) \to \calD'(\RR^\times)\otimeshat\Hom(\calF_\lambda(V_1)^\infty,\calF_\lambda(V_1)^{-\infty}) $$
which is injective on $I(\nu)$ for $\Re\nu>-\rho$.

\subsection{The metaplectic representation}

The real space $V_1$ is naturally a symplectic vector space with symplectic form $\omega$ given by
$$ [v,w]=\omega(v,w)F \qquad (v,w\in V_1=\bar{\frakn}_1). $$
Let $\Sp(V_1,\omega)$ denote its symplectic group with Lie algebra $\sp(V_1,\omega).$ For $\lambda \in \RR^\times$ let $\omega_{\met,\lambda}$ be the unique projective representation of $\Sp(V_1,\omega)$ on $\calF_{\lambda}(V_1)$, such that $$\sigma_\lambda(gv,t)=\omega_{\met,\lambda}(g)\circ \sigma_\lambda(v,t)\circ \omega_{\met,\lambda}(g)^{-1}.$$
This representation is called \emph{metaplectic representation} and its equivalence class only depends on the sign of $\lambda$. Let $d\omega_{\met,\lambda}$ be the derived $\sp(V_1,\omega)$-representation.
In particular, $d\omega_{\met,\lambda}$ is given by skew symmetric holomorphic resp. anti-holomorphic differential operators on $V_1$ for $\lambda>0$ resp. $\lambda <0$. The underlying Harish-Chandra module is the space $\calP(V_1)$ of holomorphic resp. antiholomorphic polynomials on $V_1$.
We recall the main result of \cite{FWZ22}, which gives the explicit decomposition of $d\omega_{\met,\lambda}$ restricted to $\frakm$, the Lie algebra of $M$. Let therefore $L:=K\cap M$ with Lie algebra $\frakl$ and let $\frakt_\frakl$ be a Cartan subalgebra of $\frakl^\CC$.
\begin{theo+}[\cite{FWZ22} Theorem~2.2]\label{thm:M_decomposition}
	Let $\lambda>0$ and assume $\frakg\not\simeq\su(p,q),\sp(n,\RR)$. Then the restriction $d\omega_{\met,\lambda}|_\frakm$ of the metaplectic representation of $\sp(V_1,\omega)$ to $\frakm$ decomposes as
	\begin{equation*}
		\label{m-deco}
		d\omega_{\met,\lambda}|_\frakm= \bigoplus_{k=0}^\infty \tau_{-k\delta_0 -\frac 12 \zeta_0},
	\end{equation*}
	where $\tau_\mu$ denotes the unitary highest weight representation of $\frakm$ with highest weight $\mu\in\frakt_\frakl^*$, $\delta_0$ is the lowest root of $V_1$ and $\zeta_0$ is the central character of $\fl$ obtained by restriction of the trace of the defining complex linear action of $\fraku(V_1)\subseteq \fsp(V_1,\omega)$ on $V_1$ to $\ft_{\fl}$.
\end{theo+}
To ease notation, we denote the $k$-th part in the decomposition of $d\omega_{\met,\lambda}|_\fm$ resp. $\omega_{\met,\lambda}|_M$, in the sense of the theorem above, by $d\omega_{\met,\lambda,k}$ resp. $\omega_{\met,\lambda,k}$.

\subsection{The Fourier transform of intertwining operators}

From Theorem~\ref{thm:M_decomposition} it follows that the Fock space $\calF_{\lambda}(V_1)$ decomposes into the direct sum of representation spaces for the unitary highest weight representations $\tau_{-k\delta_0-\frac{1}{2}\zeta_0}$,
\begin{equation}\label{eq:Fock_decomposition}
	\calF_{\lambda}(V_1)=\bigoplus_{k\geq 0}\calF_{\lambda,k}(V_1).
\end{equation}
We denote by $P_k:\calF_{\lambda}(V_1)\to\calF_{\lambda,k}(V_1)$ the orthogonal projections.
Since the Fourier transform is injective on $I(\nu)$ and $I(-\nu)$ for $\Re(\nu)\in (-\rho,\rho)$, we have that the Fourier transform $\widehat{A}_\nu$ of the intertwining operator $A_\nu$, which is given by $\widehat{A}_\nu \sigma_\lambda(u):=\sigma_\lambda(A_\nu u)$, is a diagonal operator with respect to the decomposition \eqref{eq:Fock_decomposition}.
\begin{theo+}[\cite{FWZ22} Theorem~3.1]\label{thm:eigenvalues}
	For $\frakg \not\simeq \su(p,q),\sp(n,\RR)$ the operator $\widehat{A}_\nu$ is given by
	$$\widehat{A}_\nu=\const \times\abs{\lambda}^{-\nu} \sum_{k\geq 0}a_k(\nu)\cdot P_k,$$
with positive constant only depending on the structure constants $\rho=d_1+1$ and $a_1$ (see Table~\ref{tab:2}) and
$$a_k(\nu)= \frac{2^\nu\left(\frac{-\nu+a_1+1}{2}\right)_k\Gamma(\frac{\nu-\rho+a_1+2}{2})\Gamma(\frac{\nu}{2})}{\Gamma(\frac{-\nu+\rho}{2})\Gamma(\frac{\nu+a_1+1}{2}+k)}.$$
\end{theo+}

Combined with \eqref{eq:PlancherelHeisenberg}, Theorem~\ref{thm:eigenvalues} shows the following identity for the invariant Hermitian form $\langle\cdot,\cdot\rangle_\nu$ from \eqref{eq:DefInvForm}:
\begin{equation}\label{eq:FourierTransformInvariantForm}
	\langle u,u\rangle_\nu = \const\times\sum_{k\geq0}a_k(\nu)\int_{\RR^\times}\|P_k\circ\sigma_\lambda(u)\|_{\operatorname{HS}}^2|\lambda|^{d_1-\nu}\,d\lambda \qquad (u\in I(\nu)).
\end{equation}
In particular, at the endpoint of the complementary series $\nu=\nu_0=a_1+1$ we have $a_k(\nu_0)=0$ for all $k\neq 0$, so
\begin{equation}\label{eq:FourierTransformInvariantFormNtm}
	\langle u,u\rangle_{\nu_0} = \const\times\,\,a_0(\nu_0)\int_{\RR^\times}\|P_0\circ\sigma_\lambda(u)\|_{\operatorname{HS}}^2|\lambda|^{d_1-\nu_0}\,d\lambda \qquad (u\in I(\nu)).
\end{equation}

\begin{remark}
	For $\frakg=\su(p,q)$ a similar decomposition as in Theorem~\ref{thm:M_decomposition} is obtained in \cite[Theorem~2.3]{FWZ22} and is also given by a multiplicity free direct sum of unitary highest weight representations. The parametrization is slightly different, so we omit the statement. The eigenvalues of the intertwining operator in this case are found in \cite[Theorem~3.1]{FWZ22}.
	
	In the symplectic case $\frakg=\sp(n,\RR)$, we have $\frakm=\sp(V_1,\omega)$ and the Fock space decomposes into two non-equivalent irreducible subrepresentations, the even and the odd part of the metaplectic representations. The eigenvalues for the standard intertwining operator are given in \cite[Theorem~3.8]{FWZ22}.
	
	However, the results for $\su(p,q)$ and $\sp(n,\RR)$ will not be used in the present paper.
\end{remark}

\section{Associated varieties}\label{sec:AssociatedVarieties}

We compute the associated variety of the representation $\pi_\ntm$ that occurs as proper subrepresentation at the endpoint of the complementary series. Recall that the associated variety $\calV(\pi)$ of an irreducible unitary representation $\pi$ of $G$ with annihilator $\operatorname{Ann}(\pi)\subseteq\calU(\frakg^\CC)$ is the subvariety of $(\frakg^\CC)^*$ corresponding to the graded ideal
$$ \operatorname{gr}\operatorname{Ann}(\pi)\subseteq\operatorname{gr}\calU(\frakg^\CC)\simeq S(\frakg^\CC)\simeq\CC[(\frakg^\CC)^*]. $$
By \cite[Corollay 4.7]{Vog91}, $\calV(\pi)$ is the closure of a single nilpotent coadjoint orbit. Further, by \cite[Satz 3.2~(b) and Korollar 5.4]{BK76} its dimension equals the Gelfand--Kirillov dimension of $\calU(\frakg^\CC)/\operatorname{Ann}(\pi)$ which in turn equals twice the Gelfand--Kirillov dimension of $\pi$ by \cite[Corollary 4.7]{Vog78}.

The strategy is to compute the Gelfand--Kirillov dimension of $\pi_\ntm$ by studying the growth of the dimensions of $K$-types, and to compare it with the dimensions of nilpotent coadjoint orbits. It turns out that the associated variety always is the closure of a next-to-minimal nilpotent coadjoint orbit $\calO_\ntm\subseteq(\frakg^\CC)^*$. The computation of the Gelfand--Kirillov dimension of $\pi_\ntm$ is done uniformly for all cases, but for the comparison with coadjoint orbits we carry out a case-by-case analysis.

\subsection{The Gelfand--Kirillov dimension}

Assume that $\frakg\not\simeq\sp(n,\RR),\su(p,q)$, the case $\su(p,q)$ is treated separately in Section~\ref{eq:AssVarSU(p,q)}. By Theorem~\ref{thm:CS_submodule} the $K$-types of $\pi_\ntm$ are $W_{\mu,\mu,\ell}$ with $\mu\geq|\ell|$, $\mu\equiv\ell\mod2$. Note that $\dim W_{\mu,\mu,\ell}$ is independent of $\ell$ since $\ell$ only parameterizes the action of the center of $\frakk$ which is by a scalar. By the Weyl Dimension Formula, the function $\mu\mapsto\dim W_{\mu,\mu,\ell}$ is a polynomial. We first compute its degree.

\begin{lemma}
	The degree of the polynomial $\mu\mapsto\dim W_{\mu,\mu,\ell}$ is $a_1+4b_1+2$.
\end{lemma}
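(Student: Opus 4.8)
The plan is to apply the Weyl Dimension Formula and turn the computation of the degree into a count of positive roots. Since $\ell$ only records the scalar action of the center $\frakz(\frakk)$, it does not affect dimensions, so $\dim W_{\mu,\mu,\ell}=\dim V_{\mu(\alpha_1+\alpha_2)}$, where $V_\lambda$ denotes the irreducible $\frakk'^\CC$-representation of highest weight $\lambda$ relative to a Cartan subalgebra $\frakt'$ of $\frakk'$. By the Weyl Dimension Formula,
$$ \dim V_{\mu(\alpha_1+\alpha_2)} = \prod_{\beta>0}\frac{\langle\mu(\alpha_1+\alpha_2)+\rho',\beta\rangle}{\langle\rho',\beta\rangle}, $$
the product running over the positive roots $\beta$ of $(\frakk'^\CC,(\frakt')^\CC)$ and $\rho'$ their half sum. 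Each factor is affine-linear in $\mu$ with leading coefficient $\langle\alpha_1+\alpha_2,\beta\rangle/\langle\rho',\beta\rangle$, so the degree of $\mu\mapsto\dim V_{\mu(\alpha_1+\alpha_2)}$ equals the number of positive roots $\beta$ satisfying $\langle\alpha_1+\alpha_2,\beta\rangle\neq0$.

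Next I would reduce this count to the restricted root system. The weight $\alpha_1+\alpha_2$ lies in the subspace $\frakc^*:=\RR\alpha_1\oplus\RR\alpha_2\subseteq(\frakt')^*$ spanned by the strongly orthogonal roots $2\alpha_1,2\alpha_2$, so for any root $\beta$ one has $\langle\alpha_1+\alpha_2,\beta\rangle=\langle\alpha_1+\alpha_2,\bar\beta\rangle$, where $\bar\beta$ is the orthogonal projection of $\beta$ onto $\frakc^*$. By the Harish-Chandra--Moore description of the roots of a Hermitian symmetric pair, these projections range over $0,\pm2\alpha_1,\pm2\alpha_2,\pm\alpha_1\pm\alpha_2$ and, in the $BC_2$ case, $\pm\alpha_1,\pm\alpha_2$, and the number of roots of $\frakk'^\CC$ projecting to a given nonzero restricted root equals its multiplicity in $\Sigma(\frakk',\frakl')$, namely $1$ for $\pm2\alpha_i$, $a_1$ for $\pm\alpha_1\pm\alpha_2$, and $2b_1$ for $\pm\alpha_i$. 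The roots with $\bar\beta=0$ are roots of the centralizer of $\frakc$ and are orthogonal to $\alpha_1+\alpha_2$, hence contribute nothing.

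It then remains to pair $\alpha_1+\alpha_2$ with each positive restricted root, using that $2\alpha_1$ and $2\alpha_2$ are orthogonal of equal length. One finds $\langle\alpha_1+\alpha_2,\alpha_1-\alpha_2\rangle=|\alpha_1|^2-|\alpha_2|^2=0$, so $\alpha_1-\alpha_2$ is the unique positive restricted root orthogonal to $\alpha_1+\alpha_2$, while
$$ \langle\alpha_1+\alpha_2,2\alpha_i\rangle=2|\alpha_i|^2,\quad \langle\alpha_1+\alpha_2,\alpha_1+\alpha_2\rangle=|\alpha_1|^2+|\alpha_2|^2,\quad \langle\alpha_1+\alpha_2,\alpha_i\rangle=|\alpha_i|^2 $$
are all nonzero. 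Summing the multiplicities of the non-orthogonal positive restricted roots $2\alpha_1,2\alpha_2,\alpha_1+\alpha_2,\alpha_1,\alpha_2$ yields
$$ 1+1+a_1+2b_1+2b_1 = a_1+4b_1+2, $$
which is the claimed degree.

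The main obstacle is the bookkeeping in the middle step: one must verify that the multiplicities of $\Sigma(\frakk',\frakl')$ count \emph{all} roots of $\frakk'^\CC$ projecting to a given restricted root, compact and noncompact alike, rather than only the noncompact ones. This is precisely what the Cayley transform relating the compact Cartan $\frakt'$ to the split Cartan of the pair $(\frakk',\frakl')$ guarantees, and once this identification is in place the remaining pairing computations above are entirely routine.
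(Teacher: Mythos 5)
Your proposal is correct and follows essentially the same route as the paper: both apply the Weyl Dimension Formula, reduce the degree count to the number of positive roots not orthogonal to $\alpha_1+\alpha_2$, pass to the restricted root system of $(\frakk',\frakl')$ with multiplicities $1$, $a_1$, $2b_1$, and sum to get $a_1+4b_1+2$. Your version merely spells out the projection and pairing computations that the paper leaves implicit.
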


\begin{proof}
	Recall the Weyl Dimension Formula
	$$ \dim W_{\mu,\mu,\ell} = \prod_{\alpha\in\Delta^+(\frakk^\CC,\frakt^\CC)}\frac{\langle\lambda+\rho,\alpha\rangle}{\langle\rho,\alpha\rangle}, $$
	where $\frakt\subseteq\frakk$ is a Cartan subalgebra and $\lambda\in(\frakt^\CC)^*$ denotes the highest weight of $W_{\mu,\mu,\ell}$ with respect a system $\Delta^+(\frakk^\CC,\frakt^\CC)$ of positive roots. It follows that $\dim W_{\mu,\mu,\ell}$ is a polynomial in $\mu$ whose degree is the number of positive roots which are \emph{not} orthogonal to $\lambda$. Passing to the restricted root system
	$$ \{\pm2\alpha_1,\pm2\alpha_2\}\cup\{\pm\alpha_1\pm\alpha_2\}\quad\Big[\cup\{\pm\alpha_1,\pm\alpha_2\}\Big] $$
	with multiplicities $1$, $a_1$ and $2b_1$, we can express $\lambda$ as $\mu(\alpha_1+\alpha_2)+\ell\alpha_0$. The positive restricted roots not orthogonal to $\alpha_1+\alpha_2$ are $2\alpha_1$ and $2\alpha_2$, each with multiplicity $1$, $\alpha_1+\alpha_2$ with multiplicity $a_1$, and possibly $\alpha_1$ and $\alpha_2$, each with multiplicity $2b_1$. Adding up the multiplicities shows the claim.
\end{proof}

\begin{proposition}\label{prop:GKDimGeneral}
	The Gelfand--Kirillov dimension of $\pi_\ntm$ is $a_1+4b_1+4$.
\end{proposition}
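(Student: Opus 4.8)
The plan is to read off $\GKDIM(\pi_\ntm)$ directly from the $K$-type decomposition in Theorem~\ref{thm:CS_submodule}, using the standard fact that for a Harish--Chandra module the Gelfand--Kirillov dimension equals the degree of polynomial growth of the partial sums of $K$-type dimensions. Fix a norm $\|\cdot\|$ on the real span of the weight lattice of $\frakt$ and, writing $m_\lambda$ for the multiplicity in $\pi_\ntm$ of the irreducible $K$-representation of highest weight $\lambda$, set
$$ \mathcal{N}(n)=\sum_{\|\lambda\|\leq n} m_\lambda\dim W_\lambda. $$
By the theory of Gelfand--Kirillov dimension for Harish--Chandra modules \cite{Vog78} one has $\GKDIM(\pi_\ntm)=\deg_n\mathcal{N}(n)$: indeed $\frakp^\CC$ lies in filtration degree $1$ of $\calU(\frakg^\CC)$ and shifts $K$-highest weights only by the finitely many weights of $\frakp^\CC$, so the total dimension of $\calU_n(\frakg^\CC)$ applied to a generating $K$-type is comparable to $\mathcal{N}(Cn)$ for some constant $C$, and a rescaling in $n$ does not change the growth degree. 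The remaining work is then purely combinatorial.

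By Theorem~\ref{thm:CS_submodule} the $K$-types of $\pi_\ntm$ are precisely the $W_{\mu,\mu,\ell}$ with $\mu\geq|\ell|$ and $\mu\equiv\ell\bmod 2$, each with multiplicity one, and their highest weights are $\lambda=\mu(\alpha_1+\alpha_2)+\ell\alpha_0$. Since $|\ell|\leq\mu$, the two contributions to $\lambda$ are of comparable size and $\|\lambda\|\asymp\mu$; hence the constraint $\|\lambda\|\leq n$ is equivalent, up to a fixed multiplicative constant in $n$, to $\mu\leq n$. The preceding Lemma gives $\dim W_{\mu,\mu,\ell}=p(\mu)$, a polynomial in $\mu$ of degree $a_1+4b_1+2$ independent of $\ell$, and since dimensions are positive its leading coefficient is positive.

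Next I would carry out the two summations. For fixed $\mu$ the admissible $\ell$ are $\ell\in\{-\mu,-\mu+2,\dots,\mu\}$, i.e.\ exactly $\mu+1$ values, so summing over $\ell$ gives
$$ \sum_{\substack{|\ell|\leq\mu\\ \ell\equiv\mu\bmod 2}}\dim W_{\mu,\mu,\ell}=(\mu+1)\,p(\mu), $$
a polynomial in $\mu$ of degree $a_1+4b_1+3$ with positive leading term. Summing this over $0\leq\mu\leq n$ and using $\sum_{\mu=0}^{n}\mu^{r}\asymp n^{r+1}$, one finds that $\mathcal{N}(n)$ grows like $n^{a_1+4b_1+4}$, whence $\GKDIM(\pi_\ntm)=a_1+4b_1+4$.

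The two summations are routine; the only step requiring genuine care is the identity $\GKDIM(\pi_\ntm)=\deg_n\mathcal{N}(n)$ of the first paragraph, i.e.\ the comparison between the enveloping-algebra filtration and the ordering of $K$-types by weight-norm, which is where I expect the main (though by now standard) obstacle to lie; it is supplied by \cite{Vog78}. The positivity of the leading coefficient of $p$ is what guarantees that no cancellation in the sums lowers the degree below $a_1+4b_1+4$.
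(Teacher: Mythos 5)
Your proposal is correct and the combinatorial heart of it -- summing $\dim W_{\mu,\mu,\ell}\sim\mu^{a_1+4b_1+2}$ over the $\mu+1$ admissible values of $\ell$ and then over $\mu\leq n$ -- is exactly the computation in the paper. The difference lies entirely in how you bridge the enveloping-algebra filtration and the $K$-type count. The paper does not invoke the general principle that $\GKDIM$ equals the growth degree of $\mathcal{N}(n)$; instead it cites \cite[Theorem 3.1]{Zha22}, which identifies the filtration exactly: $V_n=\calU_n(\frakg^\CC)W_{0,0,0}=\bigoplus_{|\ell|\leq\mu\leq n}W_{\mu,\mu,\ell}$. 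With that structural input, $\dim V_n$ \emph{is} the sum you compute, with no constants to track and no comparison argument needed. Your route replaces this by Vogan's general theorem that $\GKDIM$ of an admissible finitely generated Harish--Chandra module equals the polynomial growth degree of $\sum_{\|\lambda\|\leq n}m_\lambda\dim W_\lambda$; this is a legitimate and standard alternative, and it has the advantage of not depending on the precise filtration result of \cite{Zha22}. One caveat: the justification you sketch (that $\frakp^\CC$ sits in filtration degree $1$ and shifts highest weights by only finitely many weights) proves only the inclusion $\calU_n(\frakg^\CC)V_0\subseteq\bigoplus_{\|\lambda\|\leq Cn}$, i.e.\ the inequality $\GKDIM\leq\deg_n\mathcal{N}$; the reverse inequality -- that $K$-types of norm $\leq n$ are not reached only in filtration degree $\gg n$ -- is the genuinely nontrivial half of Vogan's theorem and is not a consequence of the weight-shifting remark. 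Since you explicitly defer that step to \cite{Vog78}, where it is indeed established, the argument stands, but you should be aware that the parenthetical ``indeed \dots'' does not actually prove the comparability you assert.
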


\begin{proof}
	Let $V_0=W_{0,0,0}$ and $V_n=\calU_n(\frakg^\CC)V_0$, where $\{\calU_n(\frakg^\CC)\}_{n\geq0}$ is the natural filtration of the universal enveloping algebra $\calU(\frakg^\CC)$ of $\frakg^\CC$. From \cite[Theorem 3.1]{Zha22} it follows that
	$$ V_n = \bigoplus_{\substack{|\ell|\leq\mu\leq n\\\mu\equiv\ell\mod2}}W_{\mu,\mu,\ell}, $$
	and hence
	\begin{equation*}
		\dim V_n = \sum_{\mu=0}^n\sum_{\substack{\ell=-\mu\\\ell\equiv\mu\mod2}}^\mu\dim W_{\mu,\mu,\ell} \sim \sum_{\mu=0}^n\sum_{\substack{\ell=-\mu\\\ell\equiv\mu\mod2}}^\mu\mu^{a_1+4b_1+2} = \sum_{\mu=0}^n(\mu+1)\mu^{a_1+4b_1+2} \sim n^{a_1+4b_1+4}.\qedhere
	\end{equation*}
\end{proof}

Now we know that the associated variety of $\pi_\ntm$ is the closure of a finite union of nilpotent coadjoint orbits of dimension $2\cdot\GKDIM(\pi_\ntm)=2(a_1+4b_1+4)$. In each of the cases, we show that there is a unique nilpotent orbit $\calO_\ntm$ whose dimension is minimal among all next-to-minimal orbits and equal to $2(a_1+4b_1+4)$. It follows that $\calO_\ntm$ is the unique nilpotent coadjoint orbit of dimension $2\cdot\GKDIM(\pi_\ntm)$, so the associated variety of $\pi_\ntm$ equals $\overline{\calO_\ntm}$.

\subsection{The case $\frakg=\so(2,n)$}\label{sec:AssVarSO(2,n)}

We have $a_1=n-4$ and $b_1=0$, so
$$ \GKDIM(\pi_\ntm) = a_1+4b_1+4 = n. $$
By \cite[Theorem 6.2.5]{CMG93}, there are two next-to-minimal nilpotent coadjoint orbits in $\frakg^\CC\simeq\so(n+2,\CC)$, and they are associated with the partitions $(2^4,1^{n-6})$ and $(3^1,1^{n-1})$. Using \cite[Corollary 6.1.4]{CMG93}, we find that the dimension of the orbit associated with $(2^4,1^{n-6})$ is $2(2n-6)$ while for $(3^1,1^{n-1})$ it equals $2n$. It follows that the associated variety of $\pi_\ntm$ is the next-to-minimal nilpotent coadjoint orbit associated with the partition $(3^1,1^{n-1})$.

\begin{remark}
	In \cite{ZH97}, a next-to-minimal representation of $\upO(p,q)$ is constructed under the assumption that $\min(p,q)\geq4$. The associated variety of this representation corresponds to the partition $(2^4,1^{p+q-8})$. The assumption $\min(p,q)\geq4$ is necessary for the existence of such a representation, because it is equivalent to the nilpotent orbit associated with $(2^4,1^{p+q-8})$ having real points. This implies that for $\min(p,q)=2$ there is no irreducible unitary representation of $G=\upO(p,q)$ with associated variety equal to the next-to-minimal orbit associated with the partition $(2^4,1^{n-6})$. In this sense, our representation $\pi_\ntm$ provides a replacement of the next-to-minimal representation of \cite{ZH97} for the case $\min(p,q)=2$.
\end{remark}

\subsection{The case $\frakg=\so^*(2n)$}

Here $a_1=2$ and $b_1=n-4$, so
$$ \GKDIM(\pi_\ntm) = a_1+4b_1+4 = 4n-10. $$
Since $\frakg^\CC=\so(2n,\CC)$, the discussion in Section~\ref{sec:AssVarSO(2,n)} shows that there are precisely two next-to-minimal nilpotent coadjoint orbits, and their dimensions are $2(4n-10)$ and $2(2n-2)$, respectively. The first one belongs to the partition $(2^4,1^{2n-8})$, and its closure is the associated variety of $\pi_\ntm$.

\subsection{The case $\frakg=\frake_{6(-14)}$}

In this case $a_1=4$ and $b_1=2$, so
$$ \GKDIM(\pi_\ntm) = a_1+4b_1+4 = 16. $$
By \cite[table on p.~129]{CMG93}, there is a unique next-to-minimal nilpotent coadjoint orbit in $\frakg^\CC\simeq\frake_6(\CC)$ and it has dimension $32=2\cdot16$ and Bala--Carter label $2A_1$. This shows that the associated variety of $\pi_\ntm$ is equal to the closure of this orbit.

\subsection{The case $\frakg=\frake_{7(-25)}$}

Here $a_1=6$ and $b_1=4$, so
$$ \GKDIM(\pi_\ntm) = a_1+4b_1+4 = 26. $$
By \cite[table on p.~130]{CMG93}, there is a unique next-to-minimal nilpotent coadjoint orbit in $\frakg^\CC\simeq\frake_7(\CC)$ and it has dimension $52=2\cdot26$ and Bala--Carter label $2A_1$. This shows that the associated variety of $\pi_\ntm$ is equal to the closure of this orbit.

\subsection{The case $\frakg=\su(p,q)$}\label{eq:AssVarSU(p,q)}

This case differs slightly from the other cases in the sense that the dependence of $W_{\mu_1,\mu_2,\ell}$ on $\ell$ is not just by the central character. The root system $\Delta(\frakk^\CC,\frakt^\CC)$ is of type $A_{p-1}\times A_{q-1}$ and we write
$$ \Delta(\frakk^\CC,\frakt^\CC) = \{\pm(e_i-e_j):1\leq i<j\leq p\}\cup\{\pm(f_i-f_j):1\leq i<j\leq q\}. $$
With respect to the positive system
$$ \Delta^+(\frakk^\CC,\frakt^\CC) = \{e_i-e_j:1\leq i<j\leq p\}\cup\{f_i-f_j:1\leq i<j\leq q\} $$
the highest weight of $W_{\mu_1,\mu_2,\ell}$ is
$$ \left(\frac{\mu_1+\ell}{2}e_1-\frac{\mu_1-\ell}{2}e_p\right) + \left(\frac{\mu_2+\ell}{2}f_1-\frac{\mu_2-\ell}{2}f_q\right) $$
and its dimension equals
\begin{multline*}
	\frac{(\mu_1+p-1)(\mu_2+q-1)}{(p-1)(q-1)}{\frac{\mu_1+\ell}{2}+p-2\choose p-2}{\frac{\mu_1-\ell}{2}+p-2\choose p-2}{\frac{\mu_2+\ell}{2}+q-2\choose q-2}{\frac{\mu_2-\ell}{2}+q-2\choose q-2}\\
	\sim \mu_1\mu_2(\mu_1^2-\ell^2)^{p-2}(\mu_2^2-\ell^2)^{q-2}.
\end{multline*}
The $K$-types of $\pi_\ntm$ are given by $W_{\mu_1,\mu_2,\ell}$ with $\mu_1-\mu_2=q-p$ and $|\ell|\leq\mu_1,\mu_2$, $\mu_1\equiv\mu_2\equiv\ell\mod2$. Summing the dimension of $W_{\mu,\mu,\ell}$ over $|\ell|\leq\mu_1,\mu_2\leq n$ with $\mu_1-\mu_2=q-p$ in a similar way as in the previous cases shows that
$$ \GKDIM(\pi_\ntm) = 2p+2q-4. $$

By \cite[Theorem 6.2.5]{CMG93}, the unique next-to-minimal nilpotent coadjoint orbit in $\frakg^\CC\simeq\sl(p+q,\CC)$ is associated to the partition $(2^2,1^{p+q-4})$, and by \cite[Corollary 6.1.4]{CMG93} it has dimension $4(p+q)-8=2(2p+2q-4)$. This implies that the associated variety of $\pi_\ntm$ is equal to the closure of this orbit.

\section{Restriction from $G$ to $\SL(2,\RR)\times M$}\label{sec:BranchingGtoMxSL(2,R)}

Recall that the elements $E\in\frakn_2$, $F\in\bar{\frakn}_2$ and $H\in\fraka$ form an $\sl(2)$-triple that commutes with $\frakm$. Writing $\sl(2,\RR)_A=\operatorname{span}\{E,F,H\}$, we obtain a subalgebra $\sl(2,\RR)_A\oplus\frakm$ and a corresponding subgroup $\SL(2,\RR)_A\times M$. The goal of this section is to understand the restriction of $\pi_\ntm$ to $\SL(2,\RR)_A\times M$. We first make some general observations before specializing to the cases $\frakg=\so(2,n)$ and $\frakg=\frake_{6(-14)}$.

\subsection{The Fourier transformed model of the next-to-minimal representation}\label{sec:FourierTransformForBranching}

To study the restriction of $\pi_\ntm$ to $\SL(2,\RR)_A\times M$, we use the Heisenberg Fourier transform. Assume in what follows that $\frakg\not\simeq\sp(n,\RR),\su(p,q)$.

\begin{lemma}\label{lemma:SO(2,n)_fourier_L^2}
	Viewing $\pi_\ntm$ as a quotient of $I(\nu_0)$, the Heisenberg group Fourier transform $\calF$ induces a unitary (up to a scalar) isomorphism
	$$ \calF: \pi_\ntm \to L^2(\RR^\times, \calF_{\lambda,0}(V_1)^*\otimes\calF_{\lambda}(V_1),\abs{\lambda}^{2b_1+1}\,d\lambda), \quad \calF u(\lambda)=\sigma_\lambda(u). $$
\end{lemma}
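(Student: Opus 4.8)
The plan is to realize $\pi_\ntm$ as the quotient $I(\nu_0)/\Ker A_{\nu_0}$, which is legitimate since $\pi_\ntm=\im A_{\nu_0}$ by Theorem~\ref{thm:CS_submodule}, and to read off the claimed isomorphism directly from the Fourier-side expression of the invariant form. The positive semidefinite form $\langle\cdot,\cdot\rangle_{\nu_0}$ on $I(\nu_0)$ has radical $\Ker A_{\nu_0}$ and descends to the Hilbert form defining the unitary structure on $\pi_\ntm$; since $\pi_\ntm$ is unitarizable by Theorem~\ref{thm:CS_submodule}(ii), the constant $\const\cdot a_0(\nu_0)$ in \eqref{eq:FourierTransformInvariantFormNtm} is positive. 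Because representatives differing by $\Ker A_{\nu_0}$ have the same $P_0\circ\sigma_\lambda(u)$, the well-defined invariant of a class is $P_0\circ\sigma_\lambda(u)$, and I would show that $u\mapsto P_0\circ\sigma_\lambda(u)$ is, up to this positive scalar, an isometric isomorphism onto the target. Thus only two things need checking: that the exponent of $|\lambda|$ is the claimed one, and that the isometric embedding is onto.

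First I would pin down the weight. By \eqref{eq:FourierTransformInvariantFormNtm} the norm on the quotient is a positive multiple of $\int_{\RR^\times}\|P_0\circ\sigma_\lambda(u)\|_{\operatorname{HS}}^2|\lambda|^{d_1-\nu_0}\,d\lambda$, and inspection of Table~\ref{tab:2} gives $d_1=a_1+2b_1+2$ in each of the four relevant cases, whence $d_1-\nu_0=d_1-(a_1+1)=2b_1+1$, matching the measure in the target space. For well-definedness and injectivity on the quotient I would identify the kernel of $u\mapsto P_0\circ\sigma_\lambda(u)$ with $\Ker A_{\nu_0}$: by Theorem~\ref{thm:eigenvalues} and $a_k(\nu_0)=0$ for $k\geq1$ one has $\sigma_\lambda(A_{\nu_0}u)=\widehat{A}_{\nu_0}\sigma_\lambda(u)=\const\cdot|\lambda|^{-\nu_0}a_0(\nu_0)\,P_0\circ\sigma_\lambda(u)$, so $P_0\circ\sigma_\lambda(u)\equiv0$ forces $\sigma_\lambda(A_{\nu_0}u)\equiv0$, and since $\nu_0<\rho$ the Fourier transform is injective on $I(-\nu_0)$, giving $A_{\nu_0}u=0$; the converse is immediate. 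Combined with the norm identity, this exhibits $\calF$ as a scalar multiple of an isometric embedding of $\pi_\ntm$ into $L^2(\RR^\times,\calF_{\lambda,0}(V_1)^*\otimes\calF_\lambda(V_1),|\lambda|^{2b_1+1}\,d\lambda)$.

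The main work, and the step I expect to be the real obstacle, is surjectivity, i.e. density of the image. Here I would use the non-compact picture: $C_c^\infty(\bar{N})$ embeds into $I(\nu_0)$ as functions on $G$ supported in the open cell $\bar{N}MAN$ and maps onto a dense subspace of $\pi_\ntm$, while for such $u$ the field $\lambda\mapsto\sigma_\lambda(u)$ is the convergent Heisenberg Fourier integral. By the Plancherel isomorphism~\eqref{eq:PlancherelHeisenberg} the fields $\{\sigma_\lambda(u):u\in C_c^\infty(\bar{N})\}$ are dense in $\int_{\RR^\times}\operatorname{HS}(\calF_\lambda(V_1))\,|\lambda|^{d_1}\,d\lambda$, and applying $P_0$ yields a dense subset of the sections landing in the $P_0$-block $\calF_{\lambda,0}(V_1)^*\otimes\calF_\lambda(V_1)$. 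It then remains to upgrade density from the Plancherel weight $|\lambda|^{d_1}$ to $|\lambda|^{2b_1+1}=|\lambda|^{d_1-\nu_0}$. On every set $\{\varepsilon\leq|\lambda|\leq\varepsilon^{-1}\}$ the two weights are comparable, so density transfers there; the only delicate region is $\lambda\to0$, where $|\lambda|^{d_1-\nu_0}$ is more singular than the Plancherel weight. I would handle this by truncation, approximating an arbitrary target section in the $|\lambda|^{2b_1+1}$-space by sections supported in $\{|\lambda|\geq\varepsilon\}$ using integrability of the weight near $0$, and then invoking comparability of the weights away from the origin. Being an isometry (up to a positive scalar) with dense image into a complete space, $\calF$ is then the asserted unitary isomorphism.
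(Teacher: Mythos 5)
Your proposal is correct and follows essentially the same route as the paper: realize $\pi_\ntm$ as $I(\nu_0)/\Ker A_{\nu_0}$, apply the Fourier-side formula \eqref{eq:FourierTransformInvariantFormNtm} for the invariant form, and compute $d_1-\nu_0=(a_1+2b_1+2)-(a_1+1)=2b_1+1$. The paper's proof stops there, so your additional verifications (identifying the kernel of $u\mapsto P_0\circ\sigma_\lambda(u)$ with $\Ker A_{\nu_0}$ via Theorem~\ref{thm:eigenvalues}, and the density/truncation argument for surjectivity) are correct supplements to details the paper leaves implicit.
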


Here, the space $L^2(\RR^\times, \calF_{\lambda,0}(V_1)^*\otimes\calF_{\lambda}(V_1),\abs{\lambda}^{2b_1+1}\,d\lambda)$ has to be understood as follows. By working for instance with the Schr\"{o}dinger model of the oscillator representation, one can find a Hilbert space $\calH$ and for every $\lambda\in\RR^\times$ a unitary isomorphism $\calF_\lambda(V_1)\simeq\calH$ such that $\calF_{\lambda,0}(V_1)$ is mapped onto a subspace $\calH_0$. Then $L^2(\RR^\times, \calF_{\lambda,0}(V_1)^*\otimes\calF_{\lambda}(V_1),\abs{\lambda}^{2b_1+1}\,d\lambda)$ corresponds to the space $L^2(\RR^\times,\calH_0^*\otimes\calH,|\lambda|^{2b_1+1}\,d\lambda)$ of $L^2$-functions on $\RR^\times$ with respect to the measure $|\lambda|^{2b_1+1}\,d\lambda$ with values in the Hilbert space $\calH_0^*\otimes\calH$. However, for us it is more convenient to work with functions whose value at $\lambda\in\RR^\times$ is contained in $\calF_{\lambda,0}(V_1)^*\otimes\calF_{\lambda}(V_1)$, so we suppress the isomorphism $\calF_\lambda(V_1)\simeq\calH$ in what follows.

\begin{proof}[Proof of Lemma~\ref{lemma:SO(2,n)_fourier_L^2}]
	Following \cite[Section~4]{FWZ22}, the representation $\pi_\ntm$ is a quotient of the principal series $(\pi_{\nu_0},I(\nu_0))$ for $\nu_0=a_1+1$. 
	Explicitly it is given as the quotient by the kernel of the intertwining operator $A_{\nu_0}:I(\nu_0)\to I(-\nu_0)$.
	Following \cite[Section~1.5]{FWZ22}, the Fourier transform
	$$\calF: I(\nu_0)\to \bigsqcup_{\lambda\in \RR^\times}\Hom(\calF_\lambda(V_1),\calF_{\lambda}(V_1))$$ is defined with distributional dependence on the parameter $\lambda$. 
	The inner product on the quotient $I(\nu_0)/\Ker A_{\nu_0}=\pi_\ntm$ is by \eqref{eq:FourierTransformInvariantFormNtm} explicitly given for $f_1,f_2\in \pi_\ntm$ as
	$$\langle f_1, f_2 \rangle_{\nu_0}=\const \times \int_{\RR^\times} \tr\left(\sigma_\lambda(f_1)\circ P_0 \circ \sigma_{\lambda}(f_2)^\ast\right)\abs{\lambda}^{d_1-\nu_0}\, d\lambda,$$
	where $P_0:\calF_{\lambda}(V_1)\to \calF_{\lambda,0}(V_1)$ is
        the orthogonal projection. Since $d_1=a_1+2b_1+2$ and $\nu_0=a_1+1$, the claim follows.
\end{proof}

To employ the Heisenberg group Fourier transform for the decomposition of the restriction of $\pi_\ntm$ to $\SL(2,\RR)_A\times M$, we need to understand how it behaves with respect to the action of $\SL(2,\RR)_A$ and $M$. The action of a general element of $\SL(2,\RR)_A$ turns out to be quite complicated, but the action of the parabolic subgroup $B:=\exp(\RR H)\exp(\RR F)\subseteq \SL(2,\RR)_A$ of $\SL(2,\RR)_A$ is rather simple:

\begin{lemma}[{see \cite[Proposition~3.5.5]{Fra22}}]\label{lemma:AN_action}
Let $u\in I(\nu)$.
\begin{enumerate}
\item\label{lemma:AN_action1} ($B$-action) For $t\in \RR$ we have
	\begin{align*}
		\sigma_\lambda(\pi_\nu(e^{tH})u) &= e^{(\nu-\rho)t}\delta_{e^t}\circ \sigma_{e^{-2t}\lambda}(u)\circ \delta_{e^{-t}},\\
		\sigma_\lambda(\pi(e^{tF})u) &= e^{-i\lambda t}\sigma_\lambda(u),
	\end{align*}
	where $\delta_s \zeta(z)=\zeta(sz)$.
\item\label{lemma:AN_action2} ($M$-action) For $m\in M$ we have
	$$ \sigma_\lambda(\pi_\nu(m)u) = \omega_{\met,\lambda}(m)\circ\sigma_\lambda(u)\circ\omega_{\met,\lambda}(m)^{-1}. $$
\end{enumerate}
\end{lemma}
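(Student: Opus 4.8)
The plan is to work throughout in the non-compact picture, identifying $u\in I(\nu)$ with its restriction to $\bar{N}$, and to read off each action as a geometric transformation of $\bar{N}$ that we then push through the defining integral $\sigma_\lambda(u)=\int_{\bar{N}}u(\bar{n})\sigma_\lambda(\bar{n})\,d\bar{n}$. In every case the recipe is the same: compute $\pi_\nu(g)u(\bar{n})=f(g^{-1}\bar{n})$ by writing $g^{-1}\bar{n}$ in the form $(\text{element of }\bar{N})\cdot(\text{element of }MAN)$, extract the homogeneity factor from the transformation law of $I(\nu)$, substitute in the Fourier integral, and use the covariance properties of $\sigma_\lambda$. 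Note that we only need the two one-parameter subgroups $\exp(\RR H)$ and $\exp(\RR F)$ generating $B$; a general element of $\SL(2,\RR)_A$ involves $E$ and acts in a much more complicated way.

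For the $M$-action the key structural fact is that $\frakm$ commutes with the $\sl(2)$-triple $\{E,H,F\}$, so $\Ad(m)$ fixes $F$ and hence preserves both the center $\bar{\frakn}_2=\RR F$ and the symplectic form $\omega$ on $V_1=\bar{\frakn}_1$; thus $m\mapsto\Ad(m)|_{V_1}$ is precisely the homomorphism $M\to\Sp(V_1,\omega)$ through which $\omega_{\met,\lambda}|_M$ is defined. Since $m$ normalizes $\bar{N}$ and $f$ is right-$M$-invariant, one obtains $\pi_\nu(m)u(\bar{n})=u(\Ad(m^{-1})\bar{n})$ with \emph{no} homogeneity factor; because $\Ad(m)$ is symplectic it preserves Haar measure, and the substitution $\bar{n}\mapsto\Ad(m)\bar{n}$ together with the metaplectic covariance $\sigma_\lambda(\Ad(m)\bar{n})=\omega_{\met,\lambda}(m)\circ\sigma_\lambda(\bar{n})\circ\omega_{\met,\lambda}(m)^{-1}$ yields the $M$-action formula at once. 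The second formula of the $B$-action is even simpler: $e^{tF}\in\bar{N}$ is central, so $\pi_\nu(e^{tF})u(\bar{n})=u(e^{-tF}\bar{n})$ is a pure left translation, and left-invariance of Haar measure together with $\sigma_\lambda(e^{tF})=e^{-i\lambda t}\,\mathrm{Id}$ (the central character of $\sigma_\lambda$ being $-i\lambda$) gives $\sigma_\lambda(\pi_\nu(e^{tF})u)=e^{-i\lambda t}\sigma_\lambda(u)$.

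The $H$-action is where the two nontrivial features appear. Writing $e^{-tH}\bar{n}=(\Ad(e^{-tH})\bar{n})\,e^{-tH}$ and using $f(xa)=a^{-\nu-\rho}f(x)$ produces $\pi_\nu(e^{tH})u(\bar{n})=e^{t(\nu+\rho)}u(\Ad(e^{-tH})\bar{n})$. In the Fourier integral I substitute $\bar{m}=\Ad(e^{-tH})\bar{n}$; since $\Ad(e^{tH})$ scales $V_1$ by $e^{-t}$ and the center by $e^{-2t}$, the Jacobian contributes a factor $e^{-2t(d_1+1)}=e^{-2t\rho}$, which combines with $e^{t(\nu+\rho)}$ to give exactly the clean prefactor $e^{t(\nu-\rho)}$. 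What then remains is the covariance of $\sigma_\lambda$ under the dilation automorphism $\Ad(e^{tH})|_{\bar{N}}$, namely
$$ \sigma_\lambda(\Ad(e^{tH})\bar{m})=\delta_{e^t}\circ\sigma_{e^{-2t}\lambda}(\bar{m})\circ\delta_{e^{-t}}; $$
pulling the ($\bar{m}$-independent) dilations out of the integral then finishes the $B$-action.

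I expect this last covariance identity to be the main obstacle, because the dilation $v\mapsto e^{-t}v$ is conformally symplectic rather than symplectic and so is not handled directly by the metaplectic representation. I would establish it via the Stone--von Neumann theorem: both $\bar{m}\mapsto\sigma_\lambda(\Ad(e^{tH})\bar{m})$ and $\bar{m}\mapsto\delta_{e^t}\circ\sigma_{e^{-2t}\lambda}(\bar{m})\circ\delta_{e^{-t}}$ are irreducible unitary representations of $\bar{N}$ whose value on the central element $(0,s)$ is $e^{-ie^{-2t}\lambda s}$; hence both have central character $-ie^{-2t}\lambda$ and are unitarily equivalent, equal up to a scalar. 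One then pins this scalar down to $1$ by testing on the vacuum vector in the explicit Fock model $\calF_\lambda(V_1)$, where $\delta_{e^t}$ acts as the expected dilation. Alternatively, the whole lemma is the translation of \cite[Proposition~3.5.5]{Fra22} into the present normalization, and I would cross-check the two accounts to confirm that the constants match.
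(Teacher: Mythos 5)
Your computation is correct, and it is essentially the only reasonable proof; the point of comparison is that the paper itself gives \emph{no} proof of this lemma --- it is quoted verbatim from \cite[Proposition~3.5.5]{Fra22}, so you have supplied the argument that the paper outsources. Your three reductions are all right: the $M$-covariance follows from right-$M$-invariance of $f\in I(\nu)$, the fact that $\Ad(m)$ fixes $F$ and preserves $\omega$ (hence Haar measure on $\bar N$), and the defining relation $\sigma_\lambda(gv,t)=\omega_{\met,\lambda}(g)\sigma_\lambda(v,t)\omega_{\met,\lambda}(g)^{-1}$; the $F$-action is left translation by a central element with $\sigma_\lambda(e^{tF})=e^{-i\lambda t}\mathrm{Id}$; and for the $H$-action the bookkeeping $e^{t(\nu+\rho)}\cdot e^{-2t\rho}=e^{t(\nu-\rho)}$ (Jacobian $e^{-2t(d_1+1)}$ from the $(-1,-2)$-grading) is exactly right, as is the identification of the central character $e^{-ie^{-2t}\lambda s}$ forcing the parameter $e^{-2t}\lambda$. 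One small logical wrinkle in your last step: Stone--von Neumann gives that $\bar m\mapsto\sigma_\lambda(\Ad(e^{tH})\bar m)$ and $\bar m\mapsto\delta_{e^t}\sigma_{e^{-2t}\lambda}(\bar m)\delta_{e^{-t}}$ are \emph{unitarily equivalent}, not ``equal up to a scalar''; the intertwiner is a priori an arbitrary unitary. To get actual equality you should either verify the identity directly on the generators $(v,0)$ using the explicit Fock-model formula for $\sigma_\lambda(v,0)$, or check that the two representations agree on a single cyclic vector (e.g.\ the vacuum) for \emph{all} group elements, which by density of the orbit then gives equality everywhere --- at which point the Stone--von Neumann step is superfluous. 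This is a two-line verification in the Bargmann model, so the gap is cosmetic rather than substantive, and your fallback of matching normalizations against \cite[Proposition~3.5.5]{Fra22} is precisely what the paper does.
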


By the last formula in Lemma~\ref{lemma:AN_action}, the decomposition of $\pi_\ntm|_M$ is related to the decomposition of the tensor product representation $\omega_{\met,\lambda,0}^*\otimes\omega_{\met,\lambda}|_M$. For this, we specialize to $\frakg=\so(2,n)$ or $\frakg=\frake_{6(-14)}$. Note that $\omega_{\met,\lambda,0}^*\simeq\omega_{\met,-\lambda,0}$ since $\omega_{\met,\lambda}$ and $\omega_{\met,-\lambda}$ are contragredient to each other.

\subsection{The case $G=\SO_0(2,n)$}\label{sec:BranchingSO(2,n)toMxSL(2,R)}

Let $G=\SO_0(2,n)$, $n>4$, then $\frakm=\sl(2,\RR)\oplus \so(n-2)$. To distinguish the copy of $\sl(2,\RR)$ in $M$ from $\sl(2,\RR)_A$, we denote it by $\sl(2,\RR)_M$ and similarly on the group level.

As abstract $M$-representation, the metaplectic representation $\omega_{\met,\lambda}$ and its subrepresentations $\omega_{\met,\lambda,k}$ do not depend on the parameter $\lambda$, but only on its sign. Hence we might suppress the parameter in the following whenever convenient and write
$$ \omega_{\met}^{\sgn(\lambda)}\cong \omega_{\met,\lambda} \qquad \mbox{and} \qquad \omega_{\met,k}^{\sgn(\lambda)}\cong \omega_{\met,\lambda,k}. $$

The decomposition of $\omega_{\met}^+|_M$ is made explicit in \cite[Proposition~B.2]{FWZ22}. Denote by $\eta_k^{\SO(m)}$ the irreducible representation of $\SO(m)$ on the space $\calH^k(\RR^m)$ of homogeneous harmonic polynomials on $\RR^m$ of degree $k$. Moreover, let $\tau_{\mu,\varepsilon}^{\SL(2,\RR)}$, $\mu\in i\RR$, $\varepsilon\in\ZZ/2\ZZ$, be the unitary principal series of $\SL(2,\RR)$, spherical for $\varepsilon=0$ and non-spherical for $\varepsilon=1$. We further write $\tau_k$ ($k\in \ZZ\setminus \{0\}$) for the (limit of) discrete series of $\SL(2,\RR)$ of parameter $k$, where the notation is such that $\tau_k$ is a holomorphic resp. antiholomorphic discrete series for $k\leq-2$ resp. $k\geq2$. Then
\begin{equation}\label{eq:restriction_G_met_to_M}
	\omega_{\met}^+|_M = \bigoplus_{k=0}^\infty\,\omega_{\met,\lambda,k}= \bigoplus_{k=0}^\infty\, \tau_{
		- k -(n-2)}^{\SL(2, \mathbb R)} \boxtimes \eta_{k}^{\SO(n-2)}.
\end{equation}
Since $\omega_{\met}^+$ is contragredient to $\omega_{\met}^-$, this also implies the corresponding decomposition of $\omega_{\met}^-$.

Write $\operatorname{par}(k)\in\ZZ/2\ZZ$ for the parity of $k\in\ZZ$.

\begin{lemma}\label{lemma:tensor_product_met}
	The following tensor product decomposition for representations of $M=\SL(2,\RR)_M\times\SO(n-2)$ holds:
	$$  \omega_{\met,0}^\mp\otimes \omega_{\met}^\pm|_M \simeq \bigoplus_{k\geq 0}\int^\oplus_{i\RR_{\geq 0}} \tau_{\mu,\operatorname{par}(k)}^{\SL(2,\RR)}\boxtimes\eta_k^{\SO(n-2)} \,d\mu\oplus \bigoplus_{l\geq 2} \tau_{\mp l}^{\SL(2,\RR)}\boxtimes\left(\bigoplus_{\substack{k\geq l\\l\equiv k \mod 2}}\eta_k^{\SO(n-2)}\right).$$
\end{lemma}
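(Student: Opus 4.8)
The plan is to reduce the whole statement to a single tensor product identity for $\SL(2,\RR)_M$ and then reassemble. First I would feed in the explicit $M$-branching \eqref{eq:restriction_G_met_to_M}, $\omega_{\met}^+|_M=\bigoplus_{k\geq0}\tau_{-k-(n-2)}^{\SL(2,\RR)}\boxtimes\eta_k^{\SO(n-2)}$, and observe that its $k=0$ summand is $\omega_{\met,0}^+=\tau_{-(n-2)}^{\SL(2,\RR)}\boxtimes\eta_0^{\SO(n-2)}$. Since $\eta_0^{\SO(n-2)}$ is the trivial representation and $\omega_{\met}^-$ is contragredient to $\omega_{\met}^+$, the factor $\omega_{\met,0}^-$ equals $\tau_{n-2}^{\SL(2,\RR)}$ acting trivially on $\SO(n-2)$. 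Because this factor is $\SO(n-2)$-trivial, the tensor product over $M=\SL(2,\RR)_M\times\SO(n-2)$ splits along the two commuting factors: the $\SO(n-2)$-isotypic components are exactly the $\eta_k^{\SO(n-2)}$, each occurring once and coming entirely from the $\omega_{\met}^{\pm}|_M$ side, and on the $\eta_k$-component the $\SL(2,\RR)_M$-representation is the product $\tau_{n-2}^{\SL(2,\RR)}\otimes\tau_{-k-(n-2)}^{\SL(2,\RR)}$.

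This isolates the analytic core, namely decomposing the tensor product of an antiholomorphic discrete series $\tau_b^{\SL(2,\RR)}$ and a holomorphic discrete series $\tau_{-a}^{\SL(2,\RR)}$, here with $b=n-2$ and $a=k+n-2\geq b$. The classical Clebsch--Gordan decomposition for $\SL(2,\RR)$ (Repka) gives, for $a\geq b\geq2$,
\begin{equation*}
	\tau_{-a}^{\SL(2,\RR)}\otimes\tau_b^{\SL(2,\RR)}\simeq\int_{i\RR_{\geq0}}^\oplus\tau_{\mu,\operatorname{par}(a-b)}^{\SL(2,\RR)}\,d\mu\oplus\bigoplus_{\substack{2\leq l\leq a-b\\ l\equiv a-b\bmod2}}\tau_{-l}^{\SL(2,\RR)}.
\end{equation*}
I would either cite this directly or verify it by a $\SO(2)$-weight count: every weight $w\equiv a-b\bmod2$ occurs with infinite multiplicity on the left, which forces the continuous part to be the full unitary principal series of parity $\operatorname{par}(a-b)$, while the finitely many \emph{holomorphic} discrete series $\tau_{-l}^{\SL(2,\RR)}$ account for the bounded-below excess of weights contributed by the dominant holomorphic factor $\tau_{-a}$; the range $2\leq l\leq a-b$ and the parity are read off from the lowest weights, and no limit-of-discrete-series term at $l=1$ survives. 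Setting $a-b=k$ turns the continuous parity into $\operatorname{par}(k)$ and the discrete string into $\{\tau_{-l}:2\leq l\leq k,\ l\equiv k\bmod2\}$.

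Substituting back and summing over $k$ gives, for the top signs, the direct sum over $k\geq0$ of $\big(\int_{i\RR_{\geq0}}\tau_{\mu,\operatorname{par}(k)}^{\SL(2,\RR)}\,d\mu\oplus\bigoplus_{2\leq l\leq k,\,l\equiv k}\tau_{-l}^{\SL(2,\RR)}\big)\boxtimes\eta_k^{\SO(n-2)}$. Reindexing the discrete part by $l$ — the summand $\tau_{-l}^{\SL(2,\RR)}$ pairs with $\eta_k^{\SO(n-2)}$ exactly for $k\geq l$ with $k\equiv l\bmod2$ — yields precisely the asserted formula with signs $\mp=-$, $\pm=+$. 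The other sign case follows by applying the contragredient functor: principal series $\tau_{\mu,\varepsilon}^{\SL(2,\RR)}$ are self-dual, each $\eta_k^{\SO(n-2)}$ is self-dual, and $\tau_{-l}^{\SL(2,\RR)}$ is dual to $\tau_l^{\SL(2,\RR)}$, so the holomorphic discrete series $\tau_{-l}$ get replaced by the antiholomorphic $\tau_l$.

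I expect the main obstacle to be the $\SL(2,\RR)$ decomposition in the second step, specifically pinning down the \emph{exact} discrete spectrum: that the continuous spectrum already absorbs the infinite $K$-type multiplicities, that the leftover discrete series are holomorphic rather than antiholomorphic when $a\geq b$, and that they range precisely over the stated $l$ with the correct parity and with no $l=1$ term. Everything else — the factorization over the commuting factors of $M$ coming from the $\SO(n-2)$-triviality of $\omega_{\met,0}$, and the contragredient symmetry handling the second sign — is formal.
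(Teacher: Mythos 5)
Your proposal is correct and follows essentially the same route as the paper, whose proof simply invokes the decomposition \eqref{eq:restriction_G_met_to_M} together with the classical Repka formula for $\tau_{-a}^{\SL(2,\RR)}\otimes\tau_b^{\SL(2,\RR)}$; you have merely spelled out the details (the $\SO(n-2)$-triviality of $\omega_{\met,0}^\mp$, the reindexing of the discrete spectrum, and the contragredient argument for the second sign), all of which check out.
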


\begin{proof}
This follows from the classical formulas for tensor products of holomorphic and anti-holomorphic discrete series of $\SL(2,\RR)$ (see e.g. \cite{Repka78}) and the decomposition \eqref{eq:restriction_G_met_to_M}.
\end{proof}

To also obtain information about the action of the other factor $\SL(2,\RR)_A$ on the isotypic components, we make the following observation:

\begin{prop+}\label{prop:SL(2,R)_conjugate}
	The two copies $\SL(2,\RR)_A$ and $\SL(2,\RR)_M$ of $\SL(2,\RR)$ in $G$ are conjugate via an element of $\SO(2)\times \mathrm{S}(\mathrm{O}(2)\times\mathrm{O}(n-2))\subseteq\SO(2)\times \SO(n)\subseteq G$.
\end{prop+}

\begin{proof}
	Note that it suffices to show that the two copies of $\so(2)$ in $\sl(2,\RR)$ are conjugate. Choose for example explicitly $H=E_{1,n+1}+E_{2,n+2}+E_{n+1,1}+E_{n+2,2}$. Then $\frakk\cap(\sl(2,\RR)_A\oplus\frakm)= \so(2)\oplus \so(2)\oplus\so(n-2)$ is realized in diagonal blocks in $\frakg$ in the following way. One copy of $\so(2)$ is spanned by $\diag(X,X,0_{n-2})$ and the other one by $\diag(X,-X,0_{n-2})$, where
$$X=\begin{pmatrix}
		0& 1 \\ -1 & 0
\end{pmatrix}.$$
Such two elements are conjugate via the matrix $g=\diag(1_2,y,1_{n-2})\in\upO(2,n)$, where
$$y=\begin{pmatrix}
		0& 1 \\ 1 & 0
\end{pmatrix},$$
and conjugation with $g$ is trivial on $\SO(n-2)$. Since $n>2$, the product of $g$ with some element in $\upO(n-2)$ of determinant $-1$ does the job. (The explicit matrix realizations of all subgroups can also be found in \cite[Appendix~B.2]{Fra22}.)
\end{proof}

Recall the subgroup $B\subseteq \SL(2,\RR)$. By the classification of the irreducible unitary representations of $B$ (see e.g. \cite{GN47}), there exist exactly two non-equivalent infinite dimensional irreducible unitary representations of $B$. They can be realized on $L^2(\RR_\pm,\abs{\lambda}^{\Re\alpha-1}\,d\lambda)$ for any $\alpha\in\CC$ by the action
\begin{align*}
	(e^{tH}.\varphi)(\lambda) &= e^{(\alpha-2)t}\varphi(e^{-2t}\lambda),\\
	(e^{tF}.\varphi)(\lambda) &= e^{ -i \lambda t}\varphi(\lambda),
\end{align*}
where $\varphi\in L^2(\RR_\pm,\abs{\lambda}^{\Re\alpha-1}\,d\lambda)$ and $t\in\RR$. We denote the equivalence classes by $\sigma_B^+$ and $\sigma_B^-$.

\begin{lemma}\label{lemma:SL(2,R)_to_AN_restriction}
	The restriction of a unitary principal series representation $\tau_{\mu,\varepsilon}^{\SL(2,\RR)}$ ($\mu\in i\RR$, $\varepsilon\in\ZZ/2\ZZ$) or a lowest/highest weight representation $\tau_{\pm k}^{\SL(2,\RR)}$ ($k>0$) of $\SL(2,\RR)$ to $B$ is given by
	$$
	\tau_{\mu,\varepsi}^{\SL(2,\RR)}|_{B}\cong \sigma_B^+\oplus \sigma_B^-, \qquad \tau_{\pm k}^{\SL(2,\RR)}|_{B}\cong \sigma_B^\mp.$$
\end{lemma}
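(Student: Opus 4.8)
The plan is to exploit that $B=\exp(\RR H)\exp(\RR F)=A\bar{N}$ is precisely the $ax+b$ group $\RR_{>0}\ltimes\RR$, with $A=\exp(\RR H)$ acting on $\bar{N}=\exp(\RR F)\cong\RR$ through $\Ad(e^{tH})F=e^{-2t}F$. The dual $\widehat{\bar{N}}\cong\RR$ (coordinate $\lambda$) then carries a dilation action of $A$ that preserves $\sgn\lambda$, so there are exactly two orbits $\RR_{\pm}$ and correspondingly the two infinite-dimensional irreducibles $\sigma_B^{\pm}$, each supported on one half-line in the Mackey/Fourier picture. The strategy for both families is therefore the same: realize the $\SL(2,\RR)$-representation in a model in which $\bar{N}$ acts by translation, apply the Euclidean Fourier transform in the $\bar{N}$-variable so that $\exp(tF)$ becomes multiplication by $e^{-i\lambda t}$ and $\exp(tH)$ becomes the dilation $\varphi(\lambda)\mapsto e^{(\alpha-2)t}\varphi(e^{-2t}\lambda)$ of the model above, and then read off the support in $\lambda$.

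For the unitary principal series I would use the non-compact (line) model: realizing $\tau_{\mu,\varepsilon}^{\SL(2,\RR)}$ as parabolic induction gives a space $L^2(\RR)$ of functions on $\bar{N}$ on which $A\bar{N}$ acts by the affine action (translation composed with a modulus-twisted dilation), while $\varepsilon$ enters only through the sign action of the $M$-factor and $\mu$ only through the unitary normalization. Since neither of these touches $B=A\bar{N}$, the restriction to $B$ is independent of $(\mu,\varepsilon)$. After the Fourier transform the action is exactly the one defining $\sigma_B^{\pm}$, and $L^2(\widehat{\bar{N}})=L^2(\RR_+)\oplus L^2(\RR_-)$ splits into two $B$-invariant pieces since the dilation preserves $\sgn\lambda$. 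By the classification of $\widehat{B}$ each piece is irreducible, hence equals $\sigma_B^{\pm}$, giving $\tau_{\mu,\varepsilon}^{\SL(2,\RR)}|_B\cong\sigma_B^+\oplus\sigma_B^-$.

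For the discrete series I would use the classical Hardy-space (Paley--Wiener) realization: the holomorphic, i.e.\ lowest weight, series $\tau_{-k}^{\SL(2,\RR)}$ ($k>0$) sits inside a principal series as the subspace whose vectors have Euclidean Fourier transform supported on a single half-line, and dually the antiholomorphic $\tau_{+k}^{\SL(2,\RR)}$ is supported on the opposite half-line. Restricting to $B$ and using again that each half-line subspace is $B$-invariant and carries the irreducible $\sigma_B^{\pm}$ yields $\tau_{\pm k}^{\SL(2,\RR)}|_B\cong\sigma_B^{\mp}$, provided the half-lines are correctly matched. The matching $\tau_{-k}\mapsto\sigma_B^+$ (holomorphic $\leftrightarrow\lambda>0$) is exactly the sign convention already fixed in the Fock-space model of the paper, where $\sigma_\lambda$ is realized on holomorphic functions for $\lambda>0$; alternatively one may invoke \cite{Repka78} for the branching of the discrete series to the affine subgroup.

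The one genuinely delicate point is this sign bookkeeping in the discrete-series case: verifying that the lowest weight series is supported on $\RR_{+}$ rather than $\RR_{-}$ requires tracking the chosen complex structure on $\bar{N}$, the sign in $e^{-i\lambda t}$, and the convention that $\tau_k$ is holomorphic for $k\leq-2$. Everything else is routine once the affine-group picture is in place, since the splitting of $L^2(\RR)$ into half-lines together with the classification of $\widehat{B}$ does all the work.
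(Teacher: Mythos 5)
Your proposal is correct and follows essentially the same route as the paper: for the principal series, the non-compact $L^2(\RR)$ model plus the Euclidean Fourier transform splits the space into the two half-lines carrying $\sigma_B^\pm$, and for the discrete series the identification comes from half-line support on the Fourier side (the paper packages this as the Paley--Wiener/Laplace-transform isometry $L^2(\RR_+,|\lambda|^{k-1}\,d\lambda)\to H^2_k(\Pi)$, which is the same mechanism as your Kirillov-model description). The only point to tighten is that for $k\ge2$ the discrete series embeds in a \emph{non-unitary} principal series, so the relevant inner product on the Fourier side is the weighted one on the half-line --- harmless here, since the equivalence class of $\sigma_B^\pm$ on $L^2(\RR_\pm,|\lambda|^{\Re\alpha-1}\,d\lambda)$ is independent of $\alpha$, as the paper notes.
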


\begin{proof}
	The unitary principal series of $\SL(2,\RR)$ can be realized on $L^2(\RR)$, where the action on a function $f\in L^2(\RR)$ is given by (see for example \cite[Chapter~II]{Kna86})
	$$\begin{pmatrix}
		a & 0 \\ c & a^{-1}
	\end{pmatrix}.f(x)=af(a^2x-ac) \qquad (a\in\RR^\times,c\in\RR).$$
	Applying the Euclidean Fourier transform on the real line proves the statement.\\
	Let $\Pi=\{z=x+iy\in\CC:y>0\}\subseteq \CC$ be the upper half plane. 
	The holomorphic discrete series $\tau_{-k}^{\SL(2,\RR)}$, $k\geq 2$ can be realized on the weighted Bergman space $H^2_k(\Pi)$ on $\Pi$, given by
	$$H^2_k(\Pi)=\calO(\Pi)\cap L^2(\Pi, y^{k-2}\,dx\,dy).$$
	Explicitly $\SL(2,\RR)$ acts on on a function $f\in H^2_k(\Pi)$ by
	$$(\pi_{-k}^{\SL(2,\RR)}(g)f)(z)=(-bz+d)^{-k}f\left(\frac{az-c}{-bz+d}\right),$$
	where $$g=\begin{pmatrix}a & b \\ c & d
	\end{pmatrix}\in \SL(2,\RR).$$	
	For any $k>1$ consider the Laplace-transform $\mathcal{L}$ on $L^2(\RR_+,\abs{\lambda}^{k-1}\,d\lambda)$, given by
	$$\mathcal{L}g(z)=\int_0^\infty g(\lambda)e^{i\lambda z} \, d\lambda. $$
 	By \cite[Theorem~XIII.1.1]{FK94} the Laplace-transform gives a surjective isometry onto $H^2_k(\Pi)$ and it is easily checked that it intertwines the action of $B$ given by $\sigma_B^+$.
	For the anti-holomorphic discrete series the statement follows by taking complex conjugates. For the limits of discrete series we have that $\pi_{-1}^{\SL(2,\RR)}\oplus \pi_{1}^{\SL(2,\RR)}$ is a unitary principal series which restricts to $B$ as $\sigma_B^+\oplus \sigma_B^-$ and the statement follows by a small modification of the argument.
\end{proof}

We can finally combine all the gathered information to obtain the full decomposition of $\pi_\ntm$ restricted to $\SL(2,\RR)_A\times M=\SL(2,\RR)_A\times\SL(2,\RR)_M\times\SO(n-2)$:

\begin{theo+}\label{thm:RestrictionOfSO(2,n)toMxSL2}
The restriction of the next-to-minimal representation of $\SO_0(2,n)$ to $\SL(2,\RR)_A\times \SL(2,\RR)_M\times\SO(n-2)$ is given by
	\begin{multline*}
	\pi_\ntm|_{\SL(2,\RR)\times \SL(2,\RR)\times\SO(n-2)} \simeq \bigoplus_{k=0}^\infty\Bigg(\int_{i\RR_{\geq0}}\tau_{\mu,\operatorname{par}(\mu)}^{\SL(2,\RR)}\boxtimes\tau_{\mu,\operatorname{par}(\mu)}^{\SL(2,\RR)}\,d\mu\\
	\oplus\bigoplus_{\substack{2\leq|\ell|\leq k\\\ell\equiv k\mod2}}\tau_\ell^{\SL(2,\RR)}\boxtimes\tau_\ell^{\SL(2,\RR)}\Bigg)\boxtimes\eta_k^{\SO(n-2)}.
\end{multline*}
\end{theo+}

\begin{proof}
By Lemma~\ref{lemma:SO(2,n)_fourier_L^2} we have
$$ \pi_\ntm \simeq L^2(\RR^\times, \calF_{-\lambda,0}(V_1)\otimes \calF_\lambda(V_1) ,\abs{\lambda}\,d\lambda), $$
so in view of Lemma~\ref{lemma:AN_action}~\eqref{lemma:AN_action2}, the restriction of $\pi_\ntm$ to $M$ can be written as
 \begin{equation*}
 	\pi_\ntm|_M\simeq \left(L^2(\RR_+,\abs{\lambda}\,d\lambda)\boxtimes( \omega_{\met,0}^-\otimes \omega_{\met}^+|_M)\right)\oplus \left( L^2(\RR_-,\abs{\lambda}\,d\lambda)\boxtimes(\omega_{\met,0}^+\otimes \omega_{\met}^-|_M)\right),
 \end{equation*}
by dividing $\RR^\times$ into the positive and negative axes. Here $M$ is acting trivially on $L^2(\RR_\pm,\abs{\lambda}\,d\lambda)$. It follows that the action of $B$ preserves this decomposition and acts on each $M$-isotypic component of $\omega_{\met,0}^\mp\otimes\omega_{\met}^\pm|_M$ by unitary automorphisms. The action on $\varphi \in L^2(\RR^\times, \abs{\lambda}\,d\lambda)$ is explicitly given in Lemma~\ref{lemma:AN_action}~\eqref{lemma:AN_action1} for $\nu=\nu_0=n-3=\rho-2$:
 $$ (e^{tH}\cdot\varphi)(\lambda)=e^{-2t}\varphi(e^{-2t}\lambda), \qquad (e^{tF}\cdot\varphi)(\lambda)=e^{-i\lambda t}\varphi(\lambda),$$
so that
 \begin{equation}\label{eq:ntm_M_decomp}
	\pi_\ntm|_{B\times M}\simeq \left(\sigma_B^+\boxtimes(  \omega_{\met,0}^-\otimes \omega_{\met}^+|_M)\right)\oplus \left(\sigma_B^-\boxtimes(\omega_{\met,0}^+\otimes \omega_{\met}^-|_M )\right).
\end{equation}
Since the subgroups $B$ of the two copies of $\SL(2,\RR)$ in $\SL(2,\RR)\times M$ are conjugate by Proposition~\ref{prop:SL(2,R)_conjugate}, we have by Lemma~\ref{lemma:SL(2,R)_to_AN_restriction}
\begin{multline*}
	\pi_\ntm|_{B\times B\times\SO(n-2)}\simeq  \bigoplus_{k\geq 0}\int_{i\RR_{\geq 0}}\left((\sigma_B^+\oplus \sigma_B^-)\boxtimes (\sigma_B^+\oplus \sigma_B^-)\boxtimes\eta_{k}^{\SO(n-2)}\right)\,d\mu\\
	 \oplus \bigoplus_{\abs{k}\geq 2}\sigma_B^{-\sgn(k)} \boxtimes \sigma_B^{-\sgn(k)}\boxtimes\Bigg(\bigoplus_{\substack{l\geq \abs{k}\\l\equiv k \mod 2}}\eta_l^{\SO(n-2)}\Bigg).
\end{multline*}
	Since even the whole two copies of $\SL(2,\RR)$ are conjugate, and since we know the action of the left copy by \eqref{eq:ntm_M_decomp} and Lemma~\ref{lemma:tensor_product_met}, this implies the theorem by Lemma~\ref{lemma:SL(2,R)_to_AN_restriction}.
\end{proof}

\subsection{The case $G=E_{6(-14)}$}\label{sec:BranchingE6toMxSL(2,R)}

We now study the same branching problem for $G=E_{6(-14)}$. For this, we first fix some notation regarding the exceptional Lie algebra $\frake_{6(-14)}$.

\subsubsection{The subalgebra
	$\fsu(5, 1)
	\subseteq \fe_{6(-14)}$}

Recall again our convention from \cite{FWZ22, Zha22}
that the Heisenberg parabolic
subalgebra is constructed
using the lowest Harish-Chandra
root   $\gamma_1$. More precisely -- in the present case --
let  $\gamma_1<\gamma_2$  be
Harish-Chandra strongly orthogonal roots
for the symmetric pair $(\fg, \fk)
=(\frake_{6(-14)}, \mathfrak{so}(2) +\mathfrak{spin}(10))
$
and $e_{\pm1}$ the root vectors
for $\pm \gamma_1$
with $[e_1, e_{-1}]$ being the corresponding
co-root. Our  $H$ in Section 1 is then $H=e_1+e_{-1}$.
The Levi subalgebra is $\fm=\fsu(5,1)$
and its maximal compact subalgebra is $\fl=\fu(5)=\fk\cap \fm$. 
To specify the relevant  roots
we use the Dynkin diagrams of 
of $\fe_6$ and $\fm^{\mathbb C}$.
The diagram is 
$$
\begin{tikzcd}[arrows=-]
	E_6{:} &[-2em]
	\stackrel{\alpha_1}{\bullet} \arrow[r] &
	\stackrel{\alpha_3}{\circ} \arrow[r] &
	\stackrel{\alpha_4}{\circ} \arrow[r]\arrow[d]
	&\stackrel{\alpha_5}{\circ} \arrow[r] &
	\stackrel{\alpha_6}{\circ} \\
	& && \stackrel{\circ}{\alpha_2}  &&
\end{tikzcd}, 
$$
where the circled roots  $\{\alpha_2, \ldots, \alpha_6\}$
are the compact roots and the black root  $\gamma_1=\alpha_1$ is the non-compact 
lowest root.
The compact and non-compact positive roots are (see e.g. \cite{EHW83})
\begin{align*}
	\Delta_c^+ &= \{\e_i\pm \e_j; 5\ge i>j\ge 1\},\\
	\Delta_n^+ &= \Big\{\dfrac 12 (\sum_{i=1}^5 (-1)^{\nu_i} \e_i -\e_6-\e_7 +\e_8); \text{$\sum_{i=1}^5 {\nu_i} $ is even}\Big\},
\end{align*}
with 
$$
\gamma_1=\alpha_1=\dfrac 12 (
\e_1 -\e_2-\e_3-\e_4 -\e_5 -\e_6 -\e_7 
+ \e_8 ), \quad \alpha_2 =\e_1+\e_2, \quad \alpha_j=\e_{j-1}-\e_{j-2},\,  (3\le j\le 6).
$$

The  Harish-Chandra strongly ortogonal roots
are $\gamma_1
<\gamma_2$, where
$$\gamma_2 =
\dfrac 12 (-\e_1+\e_2+\e_3 +\e_4 -\e_5 
-\e_6 -\e_7+ \e_8).$$
We shall need the opposite
Harish-Chandra roots starting from the highest one.
This is the pair
$
\tilde \gamma_2 >\tilde \gamma_1,
$
with
$$\tilde \gamma_2 =
\dfrac 12 (\e_1+\e_2+\e_3 +\e_4 +\e_5 
-\e_6 -\e_7+ \e_8), \qquad
\tilde \gamma_1 
=\frac 12(-\varepsilon_1-\varepsilon_2 
-\varepsilon_3-\varepsilon_4 +\varepsilon_5 
-\varepsilon_6-\varepsilon_7 +\varepsilon_8).
$$
These formulas can  easily be checked 
since there are five non-compact positive 
roots orthogonal to the highest root $\tilde \gamma_2$, 
and $\tilde \gamma_1$ is the highest one among
them with our given ordering. 

The positive roots of $\fm^{\mathbb C}=\fsl(6, \mathbb C)$ are
\begin{align*}
	\Delta_c^+(\fm^{\mathbb C}) &= \{\e_j- \e_i; 5\ge j>i\ge 2\}\cup\{\e_i+ \e_1; 5\ge i\ge 1 \},\\
	\Delta_n^+(\fm^{\mathbb C}) &= \Big\{\dfrac 12 (\sum_{i=1}^5 (-1)^{\nu_i} \e_i -\e_6-\e_7 +\e_8)\in \Delta_n^+; \text{$(-1)^{\nu_1}-\sum_{i=2}^5 (-1)^{\nu_i} +3=0$}\Big\}.
\end{align*}
The roots $\beta_1=\alpha_2,  \beta_2=\alpha_4, \beta_3=\alpha_5,  \beta_4=\alpha_6$
form a system of simple compact roots, 
and together with $\beta_5=\gamma_2$ we get
a system of simple roots for $\fm^{\mathbb C}$, all orthogonal to $\gamma_1$.
The Dynkin diagram for $\fm^{\mathbb C}$ is now
$$
\begin{tikzcd}[arrows=-]
	\fm^{\mathbb C}{:} &[-2em]
	\stackrel{\beta_1}{\circ} \arrow[r] &
	\stackrel{\beta_2}{\circ} \arrow[r] &
	\stackrel{\beta_3}{\circ} \arrow[r]
	&\stackrel{\beta_4}{\circ} \arrow[r] &
	\stackrel{\beta_5}{\bullet}\end{tikzcd}.
$$
Let $\lambda_2$ and $\lambda_5$
be the corresponding fundamental weights
dual to $\beta_2$ and  $\beta_5$ as
representations of
$\fl^{\mathbb C} =
\fgl(5, \mathbb C)$. Then 
$$
\lambda_2 =\frac 13(2\beta_1 + 4\beta_2
+ 3\beta_3+2\beta_4+\beta_5), \qquad
\lambda_5=
\frac 16 (
\beta_1
+2 \beta_2
+3 \beta_3 
+4 \beta_4
+5 \beta_5 ).
$$
Considered as a character
of $\fl$ and $\fl^{\mathbb C}$, $\lambda_5$
is the fundamental central character of
$\fl^{\mathbb C}$, which
we write as $\mathbb C^{\fl}_{\lambda_5}$.

The Harish-Chandra strongly orthogonal root
for
$\fm=\fsu(5,1)$
is then $\beta_3=\alpha_5$,
and the highest non-compact root is 
$\tilde\gamma_2$; 
here $\fm$
is not of tube type so $\beta_5=\gamma_2$
is the lowest non-compact root orthogonal to $\gamma_1$
and there are three non-compact 
roots between $\beta_5=\gamma_2$ and 
$\tilde\gamma_2$, they are 
$$
\beta_5 + \beta_4, \quad
\beta_2 + (\beta_4+\beta_3), \quad
\beta_2 + (\beta_4+\beta_3+\beta_2). 
$$
The Harish-Chandra
decomposition for  
$\fm=\fsu(5,1)$ is
$$
\fm^{\mathbb C}=\fsl(6, \mathbb C)
=\bar{\CC}^5 + \fgl(5, {\mathbb C})+ \CC^5.
$$

The space $V_1$ has lowest weight  
$$
\delta_0  
=\dfrac 12 (
-\e_1 + \e_2-\e_3-\e_4 -\e_5 
-\e_6 -\e_7 
+ \e_8 ),  
$$
and the central character $\tr \ad_{V_1}$
of
$
\fgl(5, \mathbb C) 
$
is easily found to be
$$
\frac 12 \tr \ad_{V_1} = 3 \lambda_5 =
\frac 12( 5\beta_5  
+ 4\beta_4 + 3 \beta_3 +2 \beta_2+
\beta_1) .
$$

\subsubsection{Tensor products of highest and lowest weight representations of $\fsu(5, 1)$}

The branching of the
metaplectic representation $\omega_{\met}^+$
of $\Sp(V_1,\omega)$ restricted to $M=\SU(5, 1)$
is explicitly obtained in \cite{FWZ22}, and it is a sum
of the holomorphic representations $\omega_{\met,k}^+=\pi_{-k\delta_0 -3\lambda_5}$ of $\fsu(5,1)$ with highest weight ${-k\delta_0 -3\lambda_5}$, $k\ge 0$. As in the previous section, we first find the decomposition of the tensor product representation $\omega_{\met,0}^-\otimes\omega_{\met}^+|_M\simeq\pi_{-3\lambda_5}^*\otimes\pi_{-k\delta_0-3\lambda_5}$.

\begin{proposition}\label{prop:TensorProductForE6}
	\begin{enumerate}
		\item
		The highest weight representations
		$(  \pi_{-k\delta_0 - 3\lambda_5}, 
		\fsu(5, 1))$
		do not belong to the discrete series. They are contained in the continuous
		range of the analytic continuation of the discrete series, i.e., they are not reduction points.
		\item
		The tensor product
		$\pi_{-3\lambda_5}^\ast\otimes\pi_{-k\delta_0
			-3\lambda_5}$ is unitarily equivalent
		to the induced representation
		$$L^2(\SU(5, 1)/{\rm U}(5),-k\delta_0)
		:=\Ind_{{\rm U}(5)}^{\SU(5, 1)}
		(-k\delta_0).  $$
	\end{enumerate}	
\end{proposition}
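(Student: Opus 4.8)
The plan is to handle the two assertions separately, using the explicit highest weights recorded above together with the classification of unitary highest weight modules in \cite{EHW83}. For part (1), I would first observe that each $\pi_{-k\delta_0-3\lambda_5}=\omega_{\met,k}^+$ is automatically an \emph{irreducible unitary} module, being one of the multiplicity-free summands of the unitary metaplectic representation $\omega_{\met}^+|_M$ in Theorem~\ref{thm:M_decomposition}. Hence unitarity is free, and the only real content is to locate $\Lambda_k=-k\delta_0-3\lambda_5$ within the $\SU(5,1)$ Wallach picture and to show that the generalized Verma module $N(\Lambda_k)$ is already irreducible (this is what ``not a reduction point'' means, and it does not follow merely from irreducibility of $L(\Lambda_k)$). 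Since $\mathfrak{m}=\mathfrak{su}(5,1)$ is Hermitian of real rank one, the unitary modules on the central line form a single continuous interval (the analytic continuation of the holomorphic discrete series), terminating at a first reduction point and followed by at most finitely many isolated points; crucially, for parameters strictly inside the continuous interval one has $N(\Lambda)=L(\Lambda)$. I would therefore compute the pairings $\langle\Lambda_k+\rho_{\mathfrak{m}},\gamma\rangle$ against the noncompact roots $\gamma$ listed above (using $\delta_0$, the central character $3\lambda_5=\tfrac12\operatorname{tr}\operatorname{ad}_{V_1}$, and the simple roots $\beta_j$), and compare the central parameter against the holomorphic discrete series threshold and the first reduction point. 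The expected outcome is that the parameter lies strictly inside the continuous range, above the discrete series threshold, so simultaneously ``not discrete series'', ``in the continuous range'', and ``not a reduction point''.

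For part (2), the strategy is to realize both factors on weighted Bergman spaces over the ball $\mathbb{B}^5=\SU(5,1)/\mathrm{U}(5)$ and to identify the tensor product by restriction to the diagonal. Using the Harish-Chandra decomposition $\mathfrak{m}^{\mathbb{C}}=\bar{\mathbb{C}}^5+\mathfrak{gl}(5,\mathbb{C})+\mathbb{C}^5$ recorded above, I would realize $\pi_{-3\lambda_5}$ on the scalar weighted Bergman space $\mathcal{H}_0$ of holomorphic functions on $\mathbb{B}^5$ with the weight determined by the central character $3\lambda_5$ (as in \cite{FK94}), and $\pi_{-k\delta_0-3\lambda_5}$ on the vector-valued weighted Bergman space $\mathcal{H}_k$ of holomorphic sections of the homogeneous bundle $\SU(5,1)\times_{\mathrm{U}(5)}W_{-k\delta_0}$ carrying the \emph{same} scalar weight. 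One then identifies $\pi_{-3\lambda_5}^\ast\otimes\pi_{-k\delta_0-3\lambda_5}$ with the Hilbert space of kernels $F(w,\bar z)$, holomorphic in $w\in\mathbb{B}^5$, antiholomorphic in $z\in\mathbb{B}^5$ and valued in $W_{-k\delta_0}$, equipped with the diagonal $M$-action of Lemma~\ref{lemma:AN_action}\eqref{lemma:AN_action2}. The key intertwiner $\Phi$ sends such a kernel to its restriction to the diagonal $z=w$, multiplied by the appropriate power of $(1-|z|^2)$. Because the two scalar weights coincide (both $3\lambda_5$), the holomorphic and antiholomorphic automorphy factors combine on the diagonal to $|j(g,z)|^{-2\cdot}$, which against the Bergman weight yields exactly the $\SU(5,1)$-invariant measure twisted by the $\mathrm{U}(5)$-cocycle of $W_{-k\delta_0}$; this makes $\Phi$ manifestly equivariant into $L^2(\SU(5,1)/\mathrm{U}(5),-k\delta_0)=\Ind_{\mathrm{U}(5)}^{\SU(5,1)}(-k\delta_0)$.

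It then remains to prove that $\Phi$ is, up to a positive constant, a unitary isomorphism. I would establish the isometry by expressing $\|\Phi F\|^2_{L^2}$ and $\|F\|^2$ through the reproducing kernels of $\mathcal{H}_0$ and $\mathcal{H}_k$ and reducing to a radial Beta/Gamma-type integral over $\mathbb{B}^5$ in the Faraut--Koranyi calculus \cite{FK94}; the matching of weights is precisely what renders this integral a finite nonzero constant independent of $F$. Surjectivity can be obtained by verifying that $\Phi$ has dense image, for instance by matching $\mathrm{U}(5)$-types on both sides via Theorem~\ref{thm:M_decomposition} and the classical holomorphic/antiholomorphic tensor product formulas (cf. \cite{Repka78}), mirroring the role that \eqref{eq:FourierTransformInvariantFormNtm} and Lemma~\ref{lemma:SO(2,n)_fourier_L^2} play in the $\SO_0(2,n)$ case. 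The main obstacle is exactly this isometry computation: one must show that restriction to the diagonal intertwines the two Hilbert structures up to a scalar, which is the analytic heart of the statement, while the remaining ingredients (equivariance, the bundle identification, and the \cite{EHW83} bookkeeping for part (1)) are essentially formal once the central weights are matched.
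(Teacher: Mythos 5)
Your part (1) is consistent with what the paper does: the paper merely identifies $-\delta_0=-\lambda_5+\lambda_2$ (so that $W_{-k\delta_0}$ is $\det^{-1}\boxtimes\wedge^2\CC^5$ twisted appropriately) and asserts that the discrete series and reduction-point conditions of \cite{EHW83} are easily checked; your plan to carry out the pairings against the noncompact roots is a legitimate, if more explicit, version of the same bookkeeping, and your remark that irreducibility of $L(\Lambda_k)$ alone does not give ``not a reduction point'' is a correct and worthwhile precision.

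Part (2) contains a genuine gap. You build the same intertwiner as the paper's restriction operator $R F(z)=(1-|z|^2)^3F(z,z)$ and correctly check equivariance, but your ``analytic heart'' --- that $\Phi$ is an isometry up to a single positive constant, to be computed by a Beta/Gamma integral ``independent of $F$'' --- is false. The tensor product $\pi_{-3\lambda_5}^\ast\otimes\pi_{-k\delta_0-3\lambda_5}$ is not irreducible: by Proposition~\ref{prop:DecompositionL2SU(5,1)/U(5)} it is a multiplicity-free direct integral of unitary principal series, so Schur's lemma only forces $\Phi^\ast\Phi$ to act by a scalar on each irreducible component; that scalar is a nonconstant function of the continuous parameter $\mu$ (the operator $\Phi\Phi^\ast$ is essentially a Berezin-type transform, whose spectral symbol is a ratio of Gamma functions in $\mu$), so no single normalization makes $\Phi$ unitary. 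This is exactly why the paper does \emph{not} attempt an isometry computation: it only verifies that $R(f_1\otimes\overline{f_2})$ lies in $L^2(\SU(5,1)/\upU(5),-k\delta_0)$ for polynomial $f_1,f_2$ (boundedness of the polynomials and of $\otimes^k(\wedge^3(I-zz^\ast))$ plus finiteness of the Lebesgue measure of $B_5$), concluding that $R$ is a densely defined closed intertwining operator with dense image, and then obtains the unitary equivalence from the partial isometry in the polar decomposition $R=U|R|$, as in \cite{Zha01}. To repair your argument you should replace the isometry step by this polar-decomposition argument (and additionally address injectivity of $R$, or equivalently that the partial isometry has trivial initial-space defect); your density-of-image step via $\upU(5)$-type matching can then be retained as the input needed for surjectivity of $U$.
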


\begin{proof}
	\begin{enumerate}	
		\item
		By computing the inner product with simple roots 
		we find 
		that the highest weight $-\delta_0$
		of the dual representation $V_1'$ is $-\delta_0 = -\lambda_5 +\lambda_2$.
		(Namely it is the representation $\det^{-1}\boxtimes \wedge^2 \mathbb
		C^5$ of $\fu(5) =\fu(1) + \fsu(5)$,  each factor
		acting on the corresponding factor in the tensor product.)
		The discrete series condition can be easily checked; see e.g. \cite{EHW83}.
		
		\item
		We realize $\pi_{-k\delta_0-3\lambda_5}$ on holomorphic sections of the homogeneous vector bundle over $\SU(5,1)/\upU(5)$ induced by the representation $W_{-k\delta_0}$ of $\upU(5)$ of highest weight $-k\delta_0$. Similarly, we realize $\pi_{-3\lambda_5}^*$ on anti-holomorphic functions on $\SU(5,1)/\upU(5)$. Consider the restriction operator
		$$
		R:\pi_{-k\delta_0-3\lambda_5}\otimes 
		\pi_{-3\lambda_5}^\ast\to 
		C^\infty(\SU(5,1)/\upU(5); -k\delta_0), 
		RF(z)=(1-|z|^2)^3 F(z, z),
		$$
		where
		$C^\infty(\SU(5,1)/\upU(5); -k\delta_0) $
		is the space of  smooth sections of the vector bundle 
		induced by the representation $W_{-k\delta_0}$. A direct computation shows that $R$ is $\SU(5,1)$-intertwining.
		Consider then the corresponding  $L^2$-space  $L^2(\SU(5,1)/\upU(5),-k\delta_0)$. 
		Since $-k\delta_0=k(-4\lambda_5+\lambda_2) + 3k\lambda_5$, this is the space 
of $\odot^k(\wedge^3 T_0^{(1, 0)}(B_5)' \otimes \mathbb C_{3k\lambda_5}^{\fl}
		$-valued
		functions on the bounded symmetric domain $B_5=\SU(5,1)/\upU(5)$ with the square norm
		\begin{equation}
			\begin{split}
				\Vert f\Vert^2 
				&=
				\int_{B_5}
				\langle (1-|z|^2)^{-3k}
				\otimes^k (\wedge^3 B(z, z)^t) f(z), f(z)\rangle 
				\,d\iota(z)\\
				&=\int_{B_5}
				\langle  \otimes^k (\wedge^3 (I-\bar z z^t)) f(z), f(z)\rangle 
				\,d\iota(z), 
			\end{split}
		\end{equation}
		where $B(z, z)^t= (1-|z|^2) (I- \bar z z^t)$
		is the metric on $T_z^{(1, 0)}(B_5)'$ dual to the   
		tangent bundle dual to the Bergman metric on the holomorphic tangent space $T_z^{(1, 0)}(B_5)$ with $T_0^{(1, 0)}(B_5)$ being viewed as a representation of $U(5)$, and
		$$d\iota(z)
		=\frac{dm(z)}
		{(1-|z|^2)^{6}
		}
		$$ is the 
		invariant measure on $B_5$
		with $dm(z)$
		the Lebesgue measure; see e.g. \cite{HLZ04}.
		Now the space 
		$  \pi_{
			-k\delta_0 
			-3\lambda_5}$
		contains all 
		$W_{-k\delta_0}$-valued holomorphic 
		polynomials $f_1$, 
		and    $ \pi_{-3\lambda_5}$
		all scalar holomorphic polynomials $f_2$, 
		since $ \pi_{-k\delta_0-3\lambda_5}$ are not reduction
		points in the analytic continuation of the
		holomorphic discrete series. 
		We prove that
		$F=R(f_1\otimes \overline{f_2})$ is in the space
		$  L^2(G/K,-k\delta_0)$.
		Indeed
		we have
		\begin{align*}
			\langle F, 
		F  \rangle
		&=
		\int_{B_5}
		\langle  \otimes^k (\wedge^3 (I-z z^\ast)) (1-|z|^2)^3 
		f_1(z) \overline{f_2}(z), (1-|z|^2)^3 f_1(z)\overline{f_2}(z)\rangle\,d\iota(z)\\
		&=\int_{B_5}
		\langle  \otimes^k (\wedge^3 (I-z z^\ast)) 
		f_1(z) \overline{f_2}(z), f_1(z)\overline{f_2}(z)\rangle 
		\,dm(z).
		\end{align*}
		Now the polynomials $f_1, f_2$ are bounded on $B_5$ as
		well as the matrix $\otimes^k (\wedge^3 (I-z z^\ast)) $,
		and $B_5$ is of finite Lebesgue measure. Thus
		$  \langle F, 
		F  \rangle <\infty    $ and $F \in  L^2(B_5,-k\delta_0)$.
		The rest of the argument is done by abstract argument
		by using the polar decomposition of
		the (unbounded) densely defined closed operator $R$
		with dense image; see e.g. \cite{Zha01} for details about this technique.\qedhere
	\end{enumerate}
\end{proof}

Next, we determine the decomposition of $L^2(\SU(5,1)/\upU(5),-k\delta_0)$ into irreducible representations of $\SU(5,1)$. The representations that occur are unitary principal series of $\SU(5,1)$, so we fix a (minimal) parabolic subgroup $P_M=M_MA_MN_M$ of $M=\SU(5,1)$. Then $M_M$ is a double cover of $\upU(4)$ whose irreducible representations are parameterized by tuples $(\nu_1,\nu_2,\nu_3,\nu_4)$ with $\nu_j\in\frac{1}{2}\ZZ$ and $\nu_i-\nu_j$ for all $1\leq i<j\leq4$. Moreover, the irreducible unitary characters of $A_M$ are parameterized by $\mu\in i\RR$. We denote the corresponding parabolically induced representation of $\SU(5,1)$ by $\tau_{(\nu_1,\nu_2,\nu_3,\nu_4),\mu}^{\SU(5,1)}$.

\begin{proposition}\label{prop:DecompositionL2SU(5,1)/U(5)}
	For every $k\geq0$ the representation $L^2(\SU(5,1)/\upU(5),-k\delta_0)$ of $\SU(5,1)$ and its dual $L^2(\SU(5,1)/\upU(5),-k\delta_0)^*$ decompose into irreducible representations as follows:
	\begin{align*}
		L^2(\SU(5,1)/\upU(5),-k\delta_0) &\simeq \bigoplus_{m=0}^k\int_{i\RR_{\geq0}}\tau_{(\frac{k}{2},\frac{k}{2}-m,-\frac{k}{2},-\frac{k}{2}),\mu}^{\SU(5,1)}\,d\mu,\\
		L^2(\SU(5,1)/\upU(5),-k\delta_0)^* &\simeq \bigoplus_{m=0}^k\int_{i\RR_{\geq0}}\tau_{(\frac{k}{2},\frac{k}{2},\frac{k}{2}-m,-\frac{k}{2}),\mu}^{\SU(5,1)}\,d\mu.
	\end{align*}
\end{proposition}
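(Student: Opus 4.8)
The plan is to read $L^2(\SU(5,1)/\upU(5),-k\delta_0)$ as the space of $L^2$-sections of the homogeneous vector bundle over the rank-one Hermitian symmetric space $B_5=\SU(5,1)/\upU(5)$ with fiber $W_{-k\delta_0}$, and to decompose it by the Plancherel theorem for such bundles. Since $\SU(5,1)$ has real rank one, the split component $A_M$ of the minimal parabolic $P_M=M_MA_MN_M$ is one-dimensional, so the continuous spectrum is parameterized by a single variable $\mu\in i\RR$ and the representations that occur are the principal series $\tau^{\SU(5,1)}_{\sigma,\mu}$ induced from $P_M$. In the compact picture, $\tau^{\SU(5,1)}_{\sigma,\mu}$ restricts to $K$ as $\Ind_{M_M}^{K}\sigma$, so by Frobenius reciprocity the multiplicity of $\tau^{\SU(5,1)}_{\sigma,\mu}$ in $L^2(\SU(5,1)/\upU(5),-k\delta_0)$ equals the branching multiplicity $[W_{-k\delta_0}|_{M_M}:\sigma]$ of $\sigma$ in the restriction of the fiber to $M_M$, a double cover of $\upU(4)$.

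First I would identify $W_{-k\delta_0}$ explicitly. From $-\delta_0=-\lambda_5+\lambda_2$, i.e. the representation $\det^{-1}\boxtimes\wedge^2\CC^5$ of $\fu(5)$, one reads off that $W_{-k\delta_0}$ has $\upU(5)$-highest weight $(0,0,-k,-k,-k)$ in standard coordinates. The branching to $\upU(4)$ is then the classical $\GL_5\downarrow\GL_4$ interlacing (Gelfand--Tsetlin) rule: a $\upU(4)$-weight $(b_1,b_2,b_3,b_4)$ occurs, with multiplicity one, exactly when $0\ge b_1\ge 0\ge b_2\ge -k\ge b_3\ge -k\ge b_4\ge -k$, forcing $b_1=0$, $b_3=b_4=-k$ and $b_2\in\{0,-1,\dots,-k\}$. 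Hence the restriction is multiplicity free and the $\upU(4)$-types are precisely $(0,-m,-k,-k)$, $m=0,\dots,k$; after the re-centering that separates the split $A_M$-direction from $M_M$ and the half-integral shift of the double cover (here adding $\tfrac{k}{2}(1,1,1,1)$), these become the parameters $(\tfrac{k}{2},\tfrac{k}{2}-m,-\tfrac{k}{2},-\tfrac{k}{2})$ of the statement, each appearing once with $\mu$ ranging over $i\RR_{\ge0}$.

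It remains to establish that the spectrum is purely continuous with Plancherel measure $d\mu$ on $i\RR_{\ge0}$, and here I would use the tensor-product realization. By Proposition~\ref{prop:TensorProductForE6}(2) our space is unitarily $\pi_{-3\lambda_5}^\ast\otimes\pi_{-k\delta_0-3\lambda_5}$, a tensor product of a lowest with a highest weight module, and by part~(1) the factor $\pi_{-k\delta_0-3\lambda_5}$ lies in the continuous range of the analytic continuation of the holomorphic discrete series and is not a reduction point. For such Berezin-type tensor products at a non-reduction point the decomposition is known to be purely continuous, given by principal series with the stated Plancherel measure (cf. \cite{HLZ04,Zha01}); combined with the branching computation this yields the first decomposition. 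The dual statement follows by the identical argument applied to the contragredient fiber $W_{-k\delta_0}^\ast$, of $\upU(5)$-highest weight $(k,k,k,0,0)$, which branches to the $\upU(4)$-types $(k,k,k-m,0)$, $m=0,\dots,k$, and hence after re-centering to $(\tfrac{k}{2},\tfrac{k}{2},\tfrac{k}{2}-m,-\tfrac{k}{2})$.

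The main obstacle I anticipate is the last step rather than the branching, which is routine interlacing: one must rigorously rule out a discrete part and pin down the exact Plancherel density for this specific bundle, making sure the non-reduction-point property of Proposition~\ref{prop:TensorProductForE6}(1) genuinely forces purely continuous spectrum for the tensor product $\pi_{-3\lambda_5}^\ast\otimes\pi_{-k\delta_0-3\lambda_5}$. A secondary, purely bookkeeping difficulty is tracking the (here $k$-dependent) re-centering and the half-integral shift of the double cover $M_M$, so that the $\upU(4)$-branching weights line up exactly with the listed parameters $\nu_j\in\tfrac12\ZZ$ in both the primal and the dual case.
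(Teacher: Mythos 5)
Your treatment of the continuous spectrum is essentially the paper's: both arguments read $L^2(\SU(5,1)/\upU(5),-k\delta_0)$ via the Plancherel theorem for the rank-one group $\SU(5,1)$, use Frobenius reciprocity to turn the multiplicity of $\tau^{\SU(5,1)}_{\sigma,\mu}$ into the branching multiplicity of $\sigma$ in $W_{-k\delta_0}|_{M_M}$, and compute the latter by the $\upU(5)\downarrow\upU(4)$ interlacing rule; your types $(0,-m,-k,-k)$ and the re-centering to $(\tfrac{k}{2},\tfrac{k}{2}-m,-\tfrac{k}{2},-\tfrac{k}{2})$ agree with the paper (which handles the dual instead by $(\tau_{(\nu_1,\nu_2,\nu_3,\nu_4),\mu})^*\simeq\tau_{(-\nu_4,-\nu_3,-\nu_2,-\nu_1),-\mu}$, but that is cosmetic).

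The genuine gap is precisely the step you flag: the absence of discrete spectrum. You propose to derive it from a general principle that ``Berezin-type tensor products at a non-reduction point are purely continuous,'' citing \cite{HLZ04,Zha01}. No such principle holds in this generality: for a nontrivial fiber $\tau$ the space $L^2(G/K,\tau)$ generically \emph{does} contain discrete series --- namely all those whose restriction to $K$ contains the relevant $K$-type (already $L^2(\SL(2,\RR)/\SO(2),\chi_n)$ with $|n|\geq 2$ has discrete spectrum) --- and your tensor product $\pi_{-3\lambda_5}^*\otimes\pi_{-k\delta_0-3\lambda_5}$ has \emph{different} parameters in the two factors, so it is not the equal-parameter Berezin-transform situation of \cite{Zha01}. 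The vanishing of the discrete part is a specific feature of the weight $-k\delta_0$ and has to be checked. The paper does this by a direct computation: writing $-k\delta_0=\tfrac{k}{2}(1,1,-1,-1,-1,1)$ in standard coordinates for $\fsl(6,\CC)$, it solves for the Harish-Chandra parameter $\mu=(\mu_1,\dots,\mu_6)$ of a hypothetical discrete series whose lowest $K$-type (given by $\mu+\rho_{\fsl(6,\CC)}-2\rho_{\fgl(5,\CC)}$) matches a Weyl permutation of $-k\delta_0$, and finds $\mu_5=\tfrac12(-k-5)<\mu_6=\tfrac12(-k+5)$, violating dominance; hence no discrete series contributes. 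Some such $K$-type computation (or a verification that no discrete series of $\SU(5,1)$ contains $W_{-k\delta_0}$) is indispensable to close your argument; without it the first, and substantive, half of the proposition is unproved.
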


\begin{proof}
	We first show that the highest weight 
	$-k\delta_0$
	is not the lowest $K$-type
	of any discrete series.
	In the standard notation for 
	$\fsl(6, \mathbb C)$,
	the $\fgl(5,\CC)$-highest weight 
	$-k\delta_0$ is 
	\begin{equation}\label{eq:kdelta0inStandardNotation}
	-k\delta_0 
	=k( -\lambda_5 +\lambda_2) 
	=\frac k2 (1, 1, -1, -1, -1, 1). 
	\end{equation}
	With the existing ordering 
	of the roots of 
	$\fsl(6, \mathbb C)$
	we have that a discrete series of 
	$\SU(5, 1)$ is determined 
	by a non-singular 
	highest weight $
	\mu:= (\mu_1, \mu_2, \mu_3, \mu_4, \mu_5, \mu_6)$
	with 
	$\mu_1>  \cdots > \mu_5 >\mu_6$
	with lowest $K$-type 
	$$
	\mu +\rho_{\fsl(6, \mathbb C)} -2\rho_{\fgl(5,\CC)}
	=\frac 12 (2\mu_1 -3, 2\mu_2-1, 
	2\mu_3+1,   2\mu_4+1, 
	2\mu_5+3,   2\mu_6-5).
	$$
	Here   $\mu_1>  \cdots >\mu_5 >\mu_6$ are all
	half-integers and $\sum_j\mu_j=0$.
	That 
	$L^2(\SU(5,1)/\upU(5),-k\delta_0)$
	has a discrete series  representation 
	is equivalent 
	to that 
	a permutation $s(-k\delta_0)$ of 
	the weight $-k\delta_0$
	by a Weyl group element $s\in S_6$
	produces the above lowest $K$-type:
	$$
	s(-k\lambda_0) 
	=\frac 12 (
	2\mu_1 -3, 2\mu_2-1, 
	2\mu_3+1, 
	2\mu_4+1, 
	2\mu_5+3,   2\mu_6-5 
	). 
	$$
	The only possible choice is then 
	$$
	2\mu_1 -3=k, 2\mu_2-1=k, 
	2\mu_3+1=k, 
	2\mu_4+1=-k, 
	2\mu_5+3=-k,   2\mu_6-5=-k, 
	$$
	i.e., 
	$$
	\mu_1 =\frac 12(k+3), \mu_2=\frac 12(k+1), 
	\mu_3=\frac 12(k-1), 
	\mu_4=\frac12(-k-3), 
	\mu_5=\frac 12(-k-5),  \mu_6=\frac 12(-k+5). 
	$$
	But then $\mu$ is not dominant, thus 
	$L^2(\SU(5,1)/\upU(5),-k\delta_0)$ has no discrete spectrum.
	
	By the Plancherel formula for $\SU(5,1)$, this implies that $L^2(\SU(5,1)/\upU(5),-k\delta_0)$ is equivalent to the multiplicity-free direct integral of unitary principal series induced from those representations of $M_M$ that appear in the restriction of the representation $W_{-k\delta_0}$ of $\upU(5)$. In view of \eqref{eq:kdelta0inStandardNotation} and the standard branching rules for the pair $(\upU(5),\upU(4)$, this implies the claim for $L^2(\SU(5,1)/\upU(5),-k\delta_0)$. Passing to the dual representation and using that $(\tau_{(\nu_1,\nu_2,\nu_3,\nu_4),\mu}^{\SU(5,1)})^*\simeq\tau_{(-\nu_4,-\nu_3,-\nu_2,-\nu_1),-\mu}$ finishes the proof.
\end{proof}

This finally allows us to prove the following result about the restriction of the next-to-minimal representation $\pi_\ntm$ of $G=E_{6(-14)}$ to $\SL(2,\RR)\times\SU(5,1)$:

\begin{theo+}\label{thm:E6_MxSL(2)}
	The restriction of the next-to-minimal representation $\pi_\ntm$ of $E_{6(-14)}$ to $\SL(2, \mathbb{R})\times\SU(5,1)$ contains for every $k>0$ and every $0<m\leq k$ a representation of the form
	$$
	\int_{i\RR_{\geq0}}\tau_{-l}^{\SL(2,\RR)}\boxtimes\tau_{(\frac{k}{2},\frac{k}{2}-m,-\frac{k}{2},-\frac{k}{2}),\mu}\,d\mu \oplus \int_{i\RR_{\geq0}}\tau_{l}^{\SL(2,\RR)}\boxtimes\tau_{(\frac{k}{2},\frac{k}{2},m-\frac{k}{2},-\frac{k}{2}),\mu}\,d\mu
	$$
	for some $l\geq1$ as a direct summand.
\end{theo+}

\begin{proof}
	Recall the subgroup $B\subseteq \SL(2,\RR)_A$ and its equivalence classes of infinite dimensional unitary irreducible representations $\sigma_B^\pm$.
	Following the same line of argumentation as in Section~\ref{sec:BranchingSO(2,n)toMxSL(2,R)}, we have that 
	$$
	\pi_\ntm|_{B\times M} \simeq \left(\sigma_B^+\boxtimes(\omega_{\met,0}^-\otimes \omega_{\met}^+|_M)\right)\oplus\left(\sigma_B^-\boxtimes(\omega_{\met,0}^+\otimes \omega_{\met}^-|_M)\right)
	$$
	which is isomorphic to 
	$$ \bigoplus_{k\geq 0}\left(\sigma_B^+\boxtimes L^2(\SU(5,1)/{\rm U}(5),-k\delta_0) \right) \oplus \left(\sigma_B^-\boxtimes L^2(\SU(5,1)/{\rm U}(5),-k\delta_0)^* \right) $$
	by Proposition~\ref{prop:TensorProductForE6}. Decomposing $L^2(\SU(5,1)/\upU(5),-k\delta_0)$ and its dual using Proposition~\ref{prop:DecompositionL2SU(5,1)/U(5)} shows that this is in turn isomorphic to
	\begin{equation*}
		\bigoplus_{k\geq 0}\bigoplus_{m=0}^k\left(\int_{i\RR_{\geq0}}\sigma_B^+\boxtimes \tau_{(\frac{k}{2},\frac{k}{2}-m,-\frac{k}{2},-\frac{k}{2}),\mu}^{\SU(5,1)}\,d\mu\oplus\int_{i\RR_{\geq0}}\sigma_B^-\boxtimes \tau_{(\frac{k}{2},\frac{k}{2},m-\frac{k}{2},-\frac{k}{2}),\mu}^{\SU(5,1)}\,d\mu\right).
	\end{equation*}
	Now, each representation $\tau_{(\frac{k}{2},\frac{k}{2}-m,-\frac{k}{2},-\frac{k}{2}),\mu}$ resp. $\tau_{(\frac{k}{2},\frac{k}{2},m-\frac{k}{2},-\frac{k}{2}),\mu}$ of $\SU(5,1)$ with $k>0$ and $0\leq m \leq k$ occurs precisely once in this decomposition. Since $\SL(2,\RR)_A$ commutes with $\SU(5,1)$, it acts on the corresponding isotypic component $\sigma_B^+\boxtimes\tau_{(\frac{k}{2},\frac{k}{2}-m,-\frac{k}{2},-\frac{k}{2}),\mu}$ resp. $\sigma_B^-\boxtimes\tau_{(\frac{k}{2},\frac{k}{2},m-\frac{k}{2},-\frac{k}{2}),\mu}$. The only unitary representations of $\SL(2,\RR)$ restricting to $\sigma_B^\pm$ are $\tau_{\mp l}$ ($l>0$) by Lemma~\ref{lemma:SL(2,R)_to_AN_restriction}. In the unitary dual of $\SL(2,\RR)$, these representations are separated, so $l$ has to be constant in every single direct integral. This shows the claim.
\end{proof}

\begin{rema+}
The previous statement intentionally excludes the case $m=0$, because in this case the representations $\tau_{(\frac{k}{2},\frac{k}{2}-m,-\frac{k}{2},-\frac{k}{2}),\mu}$ and $\tau_{(\frac{k}{2},\frac{k}{2},m-\frac{k}{2},-\frac{k}{2}),\mu}$ are equal, so
$$ \left(\sigma_B^+\boxtimes\tau_{(\frac{k}{2},\frac{k}{2},-\frac{k}{2},-\frac{k}{2}),\mu}\right)\oplus\left(\sigma_B^-\boxtimes\tau_{(\frac{k}{2},\frac{k}{2},-\frac{k}{2},-\frac{k}{2}),\mu}\right) \simeq \left(\sigma_B^+\oplus\sigma_B^-\right)\boxtimes\tau_{(\frac{k}{2},\frac{k}{2},-\frac{k}{2},-\frac{k}{2}),\mu} $$
and it is not clear which representation of $\SL(2,\RR)_A$ that restricts to $\sigma_B^+\oplus\sigma_B^-$ acts on the first factor.
\end{rema+}

\begin{rema+}
At this stage, it is not clear how to determine the parameter $l$ for every $k>0$ and $0<m \leq k$. One idea would be to use a subgroup $\SL(2,\RR)_M$ of $M=\SU(5,1)$ isomorphic to $\SU(1,1)\simeq\SL(2,\RR)$ and conjugate to $\SL(2,\RR)_A$ inside $G$. The restriction of $\tau_{(\frac{k}{2},\frac{k}{2}-m,-\frac{k}{2},-\frac{k}{2}),\mu}$ to this subgroup $\SL(2,\RR)_M$ might contain some discrete series representations of $\SL(2,\RR)$ which might be linked to the parameter $l$ by conjugating the subgroup to $\SL(2,\RR)_A$. However, since the restriction of $\tau_{(\frac{k}{2},\frac{k}{2}-m,-\frac{k}{2},-\frac{k}{2}),\mu}$ to $\SL(2,\RR)_M$ will contain several different discrete series representations, it is not clear to us how to use this information.
\end{rema+}


\begin{thebibliography}{10}
	
	\bibitem{BSKO12}
	Salem Ben~Sa\"{\i}d, Toshiyuki Kobayashi, and Bent {\O}rsted, \emph{Laguerre
		semigroup and {D}unkl operators}, Compos. Math. \textbf{148} (2012), no.~4,
	1265--1336.
	
	\bibitem{BK76}
	Walter Borho and Hanspeter Kraft, \emph{\"{U}ber die
		{G}elfand-{K}irillov-{D}imension}, Math. Ann. \textbf{220} (1976), no.~1,
	1--24.
	
	\bibitem{BP17}
	Guillaume Bossard and Boris Pioline, \emph{Exact {$\nabla^4\mathcal{R}^4$}
		couplings and helicity supertraces}, J. High Energy Phys. (2017), no.~1, 050,
	front matter+40.
	
	\bibitem{CMG93}
	David~H. Collingwood and William~M. McGovern, \emph{Nilpotent orbits in
		semisimple {L}ie algebras}, Van Nostrand Reinhold Mathematics Series, Van
	Nostrand Reinhold Co., New York, 1993.
	
	\bibitem{EHW83}
	Thomas Enright, Roger Howe, and Nolan Wallach, \emph{A classification of
		unitary highest weight modules}, Representation theory of reductive groups
	({P}ark {C}ity, {U}tah, 1982), Progr. Math., vol.~40, Birkh\"{a}user Boston,
	Boston, MA, 1983, pp.~97--143.
	
	\bibitem{FK94}
	Jacques Faraut and Adam Kor\'{a}nyi, \emph{Analysis on symmetric cones}, Oxford
	Mathematical Monographs, The Clarendon Press, Oxford University Press, New
	York, 1994, Oxford Science Publications.
	
	\bibitem{Fra22}
	Jan Frahm, \emph{Conformally invariant differential operators on {H}eisenberg
		groups and minimal representations},  (2020), to appear in M{\'{e}}m. Soc.
	Math. Fr., available at
	\href{https://arxiv.org/abs/2012.05952}{arXiv:2012.05952}.
	
	\bibitem{FWZ22}
	Jan Frahm, Clemens Weiske, and Genkai Zhang, \emph{Heisenberg parabolically
		induced representations of {H}ermitian {L}ie groups {P}art {I}:
		{I}ntertwining operators and {W}eyl transform},  (2022), to appear in {A}dv.
	{M}ath, available at
	\href{https://arxiv.org/abs/2209.04273}{arXiv:2209.04273}.
	
	\bibitem{FL12}
	Igor Frenkel and Matvei Libine, \emph{Quaternionic analysis and the
		{S}chr\"{o}dinger model for the minimal representation of {$O(3,3)$}}, Int.
	Math. Res. Not. IMRN (2012), no.~21, 4904--4923.
	
	\bibitem{GN47}
	Israel.~M. Gelfand and Mark~A. Naimark, \emph{Unitary representations of the
		group of linear transformations of the straight line}, C. R. (Doklady) Acad.
	Sci URSS (N.S.) \textbf{55} (1943), 567--570.
	
	\bibitem{GGKPS21}
	Dmitry Gourevitch, Henrik P.~A. Gustafsson, Axel Kleinschmidt, Daniel Persson,
	and Siddhartha Sahi, \emph{Eulerianity of {F}ourier coefficients of
		automorphic forms}, Represent. Theory \textbf{25} (2021), 481--507.
	
	\bibitem{GGKPS22}
	\bysame, \emph{Fourier coefficients of minimal and next-to-minimal automorphic
		representations of simply-laced groups}, Canad. J. Math. \textbf{74} (2022),
	no.~1, 122--169.
	
	\bibitem{GMV15}
	Michael~B. Green, Stephen~D. Miller, and Pierre Vanhove, \emph{Small
		representations, string instantons, and {F}ourier modes of {E}isenstein
		series}, J. Number Theory \textbf{146} (2015), 187--309.
	
	\bibitem{HKMO12}
	Joachim Hilgert, Toshiyuki Kobayashi, Jan M\"{o}llers, and Bent {\O}rsted,
	\emph{Fock model and {S}egal-{B}argmann transform for minimal representations
		of {H}ermitian {L}ie groups}, J. Funct. Anal. \textbf{263} (2012), no.~11,
	3492--3563.
	
	\bibitem{How82}
	Roger Howe, \emph{On a notion of rank for unitary representations of the
		classical groups}, Harmonic analysis and group representations, Liguori,
	Naples, 1982, pp.~223--331.
	
	\bibitem{How89}
	\bysame, \emph{Transcending classical invariant theory}, J. Amer. Math. Soc.
	\textbf{2} (1989), no.~3, 535--552.
	
	\bibitem{H08}
	\bysame, \emph{Some recent applications of induced representations}, Group
	representations, ergodic theory, and mathematical physics: a tribute to
	{G}eorge {W}. {M}ackey, Contemp. Math., vol. 449, Amer. Math. Soc.,
	Providence, RI, 2008, pp.~173--191.
	
	\bibitem{HPS96}
	Jing-Song Huang, Pavle Pand\v{z}i\'{c}, and Gordan Savin, \emph{New dual pair
		correspondences}, Duke Math. J. \textbf{82} (1996), no.~2, 447--471.
	
	\bibitem{HSS12}
	Markus Hunziker, Mark~R. Sepanski, and Ronald~J. Stanke, \emph{The minimal
		representation of the conformal group and classical solutions to the wave
		equation}, J. Lie Theory \textbf{22} (2012), no.~2, 301--360.
	
	\bibitem{HLZ04}
	Stephen Hwang, Yang Liu, and Genkai Zhang, \emph{Hilbert spaces of
		tensor-valued holomorphic functions on the unit ball of {C}-n}, Pacific
	Journal of Mathematics - PAC J MATH \textbf{214} (2004), 303--322.
	
	\bibitem{Kna86}
	Anthony~W. Knapp, \emph{Representation theory of semisimple groups}, Princeton
	Mathematical Series, vol.~36, Princeton University Press, Princeton, NJ,
	1986, An overview based on examples.
	
	\bibitem{Kob22}
	Toshiyuki Kobayashi, \emph{Multiplicity in restricting minimal
		representations},  (2022), preprint, available at
	\href{https://arxiv.org/abs/2204.05079}{arXiv:2204.05079}.
	
	\bibitem{KM11}
	Toshiyuki Kobayashi and Gen Mano, \emph{The {S}chr\"{o}dinger model for the
		minimal representation of the indefinite orthogonal group {${\rm O}(p,q)$}},
	Mem. Amer. Math. Soc. \textbf{213} (2011), no.~1000.
	
	\bibitem{KO03a}
	Toshiyuki Kobayashi and Bent {\O}rsted, \emph{Analysis on the minimal
		representation of {$\mathrm{O}(p,q)$}. {I}. {R}ealization via conformal
		geometry}, Adv. Math. \textbf{180} (2003), no.~2, 486--512.
	
	\bibitem{KO03b}
	\bysame, \emph{Analysis on the minimal representation of {$\mathrm{O}(p,q)$}.
		{II}. {B}ranching laws}, Adv. Math. \textbf{180} (2003), no.~2, 513--550.
	
	\bibitem{KO03c}
	\bysame, \emph{Analysis on the minimal representation of {${\rm O}(p,q)$}.
		{III}. {U}ltrahyperbolic equations on {$\mathbb{R}^{p-1,q-1}$}}, Adv. Math.
	\textbf{180} (2003), no.~2, 551--595.
	
	\bibitem{Li99}
	Jian-Shu Li, \emph{The correspondences of infinitesimal characters for
		reductive dual pairs in simple {L}ie groups}, Duke Math. J. \textbf{97}
	(1999), no.~2, 347--377.
	
	\bibitem{MO15}
	Jan M\"{o}llers and Yoshiki Oshima, \emph{Discrete branching laws for minimal
		holomorphic representations}, J. Lie Theory \textbf{25} (2015), no.~4,
	949--983.
	
	\bibitem{Pol22}
	Aaron Pollack, \emph{Modular forms on indefinite orthogonal groups of rank
		three}, J. Number Theory \textbf{238} (2022), 611--675, With appendix "Next
	to minimal representation" by Gordan Savin.
	
	\bibitem{Repka78}
	Joe Repka, \emph{Tensor products of unitary representations of {$SL_2(R)$}},
	American Journal of Mathematics \textbf{100} (1978), no.~4, 747--774.
	
	\bibitem{Vog78}
	David~A. Vogan, Jr., \emph{Gelfand-{K}irillov dimension for {H}arish-{C}handra
		modules}, Invent. Math. \textbf{48} (1978), no.~1, 75--98.
	
	\bibitem{Vog91}
	\bysame, \emph{Associated varieties and unipotent representations}, Harmonic
	analysis on reductive groups ({B}runswick, {ME}, 1989), Progr. Math., vol.
	101, Birkh\"{a}user Boston, Boston, MA, 1991, pp.~315--388.
	
	\bibitem{Zha01}
	Genkai Zhang, \emph{{Berezin transform on real bounded symmetric domains}},
	Trans. Amer. Math. Soc. \textbf{353} (2001), 3769--3787.
	
	\bibitem{Zha22}
	\bysame, \emph{Principal series of {H}ermitian {L}ie groups induced from
		{H}eisenberg parabolic subgroups}, J. Funct. Anal. \textbf{282} (2022),
	no.~8, Paper No. 109399, 39.
	
	\bibitem{ZH97}
	Chen-Bo Zhu and Jing-Song Huang, \emph{On certain small representations of
		indefinite orthogonal groups}, Represent. Theory \textbf{1} (1997), 190--206.
	
\end{thebibliography}

\providecommand{\bysame}{\leavevmode\hbox to3em{\hrulefill}\thinspace}
\providecommand{\MR}{\relax\ifhmode\unskip\space\fi MR }
\providecommand{\MRhref}[2]{\href{http://www.ams.org/mathscinet-getitem?mr=#1}{#2}}
\providecommand{\href}[2]{#2}

\end{document}